\def\bn{\mathbb{N}}
\def\br{\mathbb{R}} 
\def\bc{\mathbb{C}} 
\def\h{\mathcal{H}} 
\def\k{\mathcal{K}}
\newcommand{\com}[1]{#1^{\prime}}
\newcommand{\inner}[2]{\langle #1,#2\rangle} 
\newcommand{\sumj}{\sum_{j=1}^{\infty}}
\newcommand{\hg}{\stackrel{h}{\otimes}}
\newcommand{\ehg}{\stackrel{eh}{\otimes}}
\newcommand{\shg}{\stackrel{\sigma}{\otimes}}
\newcommand{\bh}{{\rm B}(\mathcal{H})}
\newcommand{\bk}{{\rm B}(\mathcal{K})}
\newcommand{\bkh}{{\rm B}(\mathcal{K},\mathcal{H})}
\newcommand{\trh}{{\rm T}(\mathcal{H})}
\newcommand{\trk}{{\rm T}(\mathcal{K})}
\newcommand{\kh}{{\rm K}(\mathcal{H})}
\newcommand{\hsh}{{\rm C^2}(\mathcal{H})}
\newcommand{\tr}{{Tr\,}}
\newcommand{\fixa}{{\mathcal F}_a}
\newcommand{\cb}[1]{{\rm CB}(#1)}
\newcommand{\weakc}[1]{\overline{#1}}
\newtheorem{theorem}{Theorem}[section]
\newtheorem{lemma}[theorem]{Lemma}
\newtheorem{co}[theorem]{Corollary}
\newtheorem{pr}[theorem]{Proposition}
\theoremstyle{remark}
\newtheorem{re}[theorem]{Remark}
\theoremstyle{definition}
\newtheorem{ex}[theorem]{Example}
\numberwithin{equation}{section}
\begin{document}

\title[]{Fixed points of normal completely positive maps on $\bh$} 
\author{Bojan Magajna} 
\address{Department of Mathematics\\ University of Ljubljana\\
Jadranska 21\\ Ljubljana 1000\\ Slovenia}
\email{Bojan.Magajna@fmf.uni-lj.si}

\thanks{Acknowledgment. I am grateful to Milan Hladnik for his corrections of the first
draft of the paper and his suggestion concerning Example \ref{ex4}. I am also grateful to Victor Shulman 
for the discussion concerning Example \ref{ex4}.}


\keywords{quantum operation, fixed point, amenable trace, C$^*$-algebra}

\subjclass[2010]{Primary 46L07, 47N50; Secondary 81R15}

\begin{abstract}Given a sequence of bounded operators $a_j$ on a Hilbert space $\h$ with $\sumj a_j^*a_j=1=\sumj a_ja_j^*$,
we study the map $\Psi$ defined on $\bh$ by $\Psi(x)=\sumj a_j^*xa_j$ and its restriction $\Phi$ to the Hilbert-Schmidt class
$\hsh$. In the case when the sum $\sumj a_j^*a_j$ is norm-convergent we show in particular that the operator
$\Phi-1$ is not invertible if and only if the C$^*$-algebra $A$ generated by $\{a_j\}_{j=1}^{\infty}$ has an amenable trace. This is
used to show that $\Psi$ may have fixed points in $\bh$ which are not in the commutant $\com{A}$ of $A$
even in the case when the weak* closure of $A$ is injective. However, if $A$ is abelian, then all fixed points of $\Psi$ are in
$\com{A}$ even if the operators $a_j$ are not positive.
\end{abstract}

\maketitle

\section{Introduction and notation}

It is well known that all normal (= weak* continuous) completely positive maps on $\bh$ (the algebra of all bounded operators
on a separable Hilbert space $\h$) are of the form
\begin{equation}\label{1}\Psi_a(x)=\sumj a_j^*xa_j=a^*x^{(\infty)}a,\end{equation}
where $a_j\in\bh$ are such that the column $a:=(a_j)$ represents a bounded operator from $\h$ to $\h^{\infty}$, and
$x^{(\infty)}$ denotes the  block-diagonal operator matrix with $x$ along the diagonal.
The sum $a^*a=\sumj a_j^*a_j$ is convergent in the strong (weak, weak*,...) operator topology. 
If $a^*a=1$ (the identity operator on $\h$),
then the map $\Psi_a$ is unital. $\Psi_a$ is dual to the map $\Psi_{*a}$ defined on the trace class
$\trh$  by 
\begin{equation}\label{2}\Psi_{*a}(t)=\sumj a_jta_j^*.\end{equation}
So, if we assume in addition that the sum $\sumj a_ja_j^*$ is convergent in the strong 
operator topology, then the map $\Psi_a$ itself preserves $\trh$. If moreover $\sumj a_ja_j^*=1$, then the map $\Psi_a|\trh$ preserves the
trace (that is, $\tr(\Psi_a(t))=\tr(t)$ for all $t\in\trh$). Such maps are called unital quantum channels in quantum computation theory \cite{K}.
A selfadjoint operator $x\in\bh$ which is fixed by $\Psi_a$ (that is, $\Psi_a(x)=x$) represents a physical quantity that passes unchanged
through the quantum channel, so it is important to know the set $\fixa$ of all fixed points of $\Psi_a$. The structure of the set $\fixa$ is studied
in several papers (see e.g. \cite{AGG}, \cite{BJKW},  \cite{LZ},
\cite{Pr}, \cite{WJ} and references there.) Obviously $\fixa$ is a unital weak operator  closed
self-adjoint subspace of $\bh$ (in particular, it is spanned by positive elements) and $\fixa$ contains the
commutant $\com{A}$ of the C$^*$-algebra $A$ generated by the operators $a_j$. If it happens that the positive part $\fixa^+$ of 
$\fixa$ is closed under the 
operation $x\mapsto x^2$, then it is well known that $\fixa=\com{A}$ (\cite{BJKW}, 
\cite{AGG}). (For a proof, just note that $(ax-x^{(\infty)}a)^*(ax-x^{(\infty)}a)=
x^2+\Psi_a(x^2)-\Psi_a(x)x-x\Psi_a(x)=0$, hence $ax=x^{(\infty)}a$. 
The assumption that $\sumj a_ja_j^*=1$ is not needed for this conclusion.) 
It is proved in \cite{AGG} that each $x\in\fixa^+$ which can be diagonalized and the sequence of eigenvalues arranged in a decreasing order
is in fact in $\com{A}$. But in general 
$\fixa$ is not equal to $\com{A}$. Namely, since the map
$\Psi_a$ is a complete contraction and $\Psi_a|\com{A}$ is the identity,  $\Psi_a$ must be the identity also on the injective
envelope of $\com{A}$. Hence, if $\com{A}$ is not injective then $\fixa\ne\com{A}$. It is proved in \cite{AGG} that $\fixa$ is always an injective
operator space. If all $a_j$ are positive operators the operator $\Psi_a$ is called a (generalized) L\" uders operator. In \cite{AGG} an example
of L\" uders operator is given where $\fixa\ne\com{A}$. It is asked in \cite{AGG} if the injectivity of $\com{A}$ (or equivalently, the
injectivity of the weak* closure $\weakc{A}$ of $A$) implies the equality $\fixa=\com{A}$ 
for L\" uders operators. In physics the von Neumann algebras 
usually appear as direct limits of finite dimensional C$^*$-algebras and are 
therefore injective, so the question seems interesting also from the viewpoint of physics. 
We shall show that the answer is negative even in the case when $A$ is an irreducible subalgebra of $\bh$ (so that $\com{A}=\bc 1$)
and only finitely many $a_j$'s are non-zero. (We remark that without positivity requirement $a_j\geq0$ the question is much easier, one can construct
counterexamples by using direct sums of suitable Toeplitz operators.)

The basic idea of counterexample is to consider the action of L\" uders operators 
(where for simplicity we assume that only finitely many $a_j$'s are
nonzero or at least that the sum in (\ref{1}) is norm convergent) on the quotient $\bh/K$, where $K$ is a twosided ideal
in $\bh$. We will exploit the fact that the commutant $\dot{A}^c$ of the image $\dot{A}$ of $A$ in $\bh/K$ can be very large so that not all of its elements  can  be lifted to 
$\com{A}$. For example, if $K$ is the (unique) closed ideal $\kh$ of all compact operators, 
then it is a well-known consequence of Voiculescu's theorem \cite{Co2}
that $\dot{A}^c$ is so large that $\dot{A}^{cc}=\dot{A}$ (note that $A$ is separable), while $\com{A}$ consists of scalars only if $A$ is irreducible.
Now let $\dot{\Psi}$ be the map induced by $\Psi:=\Psi_a$ on $\bh/K$ and let $\dot{x}\in \dot{A}^c$ be such that $\dot{x}$ can not be lifted to an element in $\com{A}$.
Then $\dot{\Psi}(\dot{x})=\dot{x}$, hence, denoting by $x$ any lift in $\bh$ of $\dot{x}$, 
\begin{equation}\label{3}y:=\Psi(x)-x\in K.\end{equation}
Since $\dot{x}$ can not be lifted to $\com{A}$, it follows that $x+z\notin\com{A}$ for all $z\in K$. So, if we can find $z\in K$ such that $\Psi(x+z)=x+z$,
then we will have $x+z\in\fixa\setminus\com{A}$. Using (\ref{3}), the condition for $z$ is that 
$$(1-\Psi)(z)=y.$$
We could then find such a $z$ if we knew that the map $(1-\Psi)|K$ is invertible. But in the case $K=\kh$ the operator $(1-\Psi)|K$ can not be invertible
since its second adjoint on $\bh$ (the bidual of $\kh$) is just $1-\Psi$, which has 
nontrivial kernel (containing $\com{A}$). Similarly $(1-\Psi)|\trh$ is
not invertible. So we have to consider
other (non-closed) ideals, the simplest of which is the Hilbert--Schmidt class $\hsh$. But
in these case every operator that commutes with a C$^*$-algebra $A$ modulo $\hsh$ is a
perturbation of an element of $\com{A}$ by an element of $\hsh$ (see \cite{Hoo}).
So we will have to consider operators that commute with all $a_j$ modulo $\hsh$, but do
not commute module $\hsh$ with the whole C$^*$-algebra $A$ generated by the operators $a_j$.
(This is possible since the space $\hsh$ is not closed in the usual operator norm.)

In Section 2 we shall see that an operator $\Psi_a$ of the form (\ref{1}) (with the sums $\sumj a_j^*a_j=1=\sumj a_ja_j^*$ weak* converging) 
always preserves $\hsh$, so we may consider the restriction $\Phi_a:=\Psi_a|\hsh$. We shall prove that if the operator $\Phi_a-1$ is not invertible then
there exists a state $\rho$ on $\bh$ such that 
$$\rho(\sumj b_ja_j)=\rho(\sumj a_jb_j)$$
for all operators $b_j\in\bh$ such that the two series $\sumj b_jb_j^*$ and 
$\sumj b_j^*b_j$ are weak* convergent. Conversely, if there exists
a state $\rho$ on $\bh$ such that $\rho(cd)=\rho(dc)$ for all $d\in\bh$ and all $c$ in 
the C$^*$-algebra $A$ generated by $\{a_j\}_{j=1}^{\infty}\cup\{1\}$  and 
$$\sumj\rho(a_j^*a_j)=1,$$
then the map $\Phi_a-1$ is not invertible. Thus, in the case when the series $\sumj a_j^*a_j$ is norm convergent, $\Phi_a-1$ is not invertible
if and only if $A$ has an amenable trace in the sense of \cite{B}, \cite{BO}. This
result is then used in Section 3 to study fixed points of $\Psi_a$ on $\bh$.

In the beginning of Section 4 we will present some
general observations on the spectra of maps on $\bh$ of the form $\Theta:x\mapsto\sumj a_jxb_j$, where
$(a_j)$ and $(b_j)$ are two commutative sequences of normal operators such that
the sums $\sum a_ja_j^*$ and $\sum b_j^*b_j$ are weak* convergent. We observe that the spectrum of
$\Theta$ in the Banach algebra $\cb{\bh}$ of all completely bounded maps on $\bh$ is the same as the
the spectrum of $\Theta$ in certain natural subalgebras of $\cb{\bh}$. (Here some facts from the theory 
of operator spaces will be needed, but these results are not used in the rest of the paper.) The spectrum
of such a map can be much larger than the closure of the set $\sigma$ of all sums
$\sumj\phi(a_j)\psi(b_j)$, where $\phi¢$ and $\psi$ are characters on the 
C$^*$-algebras generated by $(a_j)$ and $(b_j)$, respectively, but all eigenvalues of
$\Theta$ are contained in $\sigma$. 

At the end of Section 4 we will provide a short proof of the fact that if the C$^*$-algebra
$A$ generated by the operators $(a_j)$ is abelian, then the fixed points of $\Phi_a$ are
contained in $\com{A}$.  For positive
operators $a_j$ this was proved in \cite{WJ} and  also in \cite{LZ}, but
our proof is different even in this case.

\section{Amenable traces and the spectrum of $\Phi_a$}

Throughout the section $a=(a_j)$ is a bounded operator from a separable Hilbert space
$\h$ to the direct sum $\h^{\infty}$ of countably many copies of $\h$,  
such that the components $a_j\in\bh$ satisfy
\begin{equation}\label{11}a^*a=\sumj a_j^*a_j=1=\sumj a_ja_j^*.\end{equation}
(The first equality is by the definition of $a$.) As in the Introduction,  $\Psi=\Psi_a$  denotes
the map on $\bh$ defined by
\begin{equation}\label{12}\Psi_a(x)=\sumj a_j^*xa_j=a^*x^{(\infty)}a.\end{equation} By $\hsh$ we denote the ideal of all Hilbert--Schmidt
operators on $\h$, and $\|x\|_2$ denotes the Hilbert--Schmidt norm of an element $x\in\hsh$,
which is defined by $\|x\|_2=\sqrt{\tr(x^*x)}$.

\begin{pr}\label{pr1} (i) $\Psi(\hsh)\subseteq\hsh$ and the restriction  
$\Phi:=\Psi|\hsh$ is a contraction, that is $\|\Phi(x)\|_2\leq\|x\|_2$ for all $x\in\hsh$.

(ii) For all $x\in\hsh$ the inequalities $$\|ax-x^{(\infty)}a\|_2^2\leq 
2\|x-\Phi(x)\|_2\|x\|_2\ \ \mbox{and}\ \ \|\Phi(x)-x\|_2
\leq\|ax-x^{(\infty)}a\|_2$$
hold.

(iii) The operator $\Phi-1$ is not invertible if and only if there exists a sequence
of selfadjoint elements $x_k\in\hsh$ with $\|x_k\|_2=1$ such that
$$\lim_{k\to\infty}\|\Phi(x_k)-x_k\|_2=0.$$
\end{pr}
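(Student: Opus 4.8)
The plan is to read part (iii) as a statement about the spectrum of $\Phi$ regarded as a bounded operator on the Hilbert space $\hsh$, the only genuine subtlety being that the approximate eigenvectors are required to be selfadjoint. I would first dispose of the easy implication: if selfadjoint $x_k$ with $\|x_k\|_2=1$ and $\|\Phi(x_k)-x_k\|_2\to0$ exist, then $\Phi-1$ sends a sequence of unit vectors to a null sequence, so it is not bounded below; since an invertible operator is bounded below by the reciprocal of the norm of its inverse, $\Phi-1$ cannot be invertible.

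For the converse, suppose $\Phi-1$ is not invertible, i.e.\ $1$ lies in the spectrum of $\Phi$. By part (i), $\Phi$ is a contraction, so its spectrum is contained in the closed disk $\{\lambda\in\bc:|\lambda|\leq1\}$; as no neighbourhood of $1$ lies in this disk, the point $1$ must be a boundary point of the spectrum of $\Phi$. I would then invoke the standard fact that every boundary point of the spectrum belongs to the approximate point spectrum: picking $\lambda_n\to1$ in the resolvent set, the resolvent norms $\|(\Phi-\lambda_n)^{-1}\|$ blow up, and normalizing the images $(\Phi-\lambda_n)^{-1}z_n$ of suitable unit vectors produces vectors $y_k\in\hsh$ with $\|y_k\|_2=1$ and $\|\Phi(y_k)-y_k\|_2\to0$. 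This is where the contraction property of part (i) is essential, since it forces $1$ onto the boundary of the spectrum and thereby rules out that $1$ is merely in the residual spectrum.

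The remaining, and really the only delicate, step is to upgrade these approximate eigenvectors to selfadjoint ones. The key observation is that $\Phi$ commutes with the adjoint involution, $\Phi(x^*)=\Phi(x)^*$, so it preserves both the real subspace $\hsh^{\mathrm{sa}}$ of selfadjoint Hilbert--Schmidt operators and the real subspace $i\hsh^{\mathrm{sa}}$, which are mutually orthogonal in $\hsh$ because $\tr(uv)\in\br$ for selfadjoint $u,v$. Writing $y_k=u_k+iv_k$ with $u_k,v_k$ selfadjoint, the Pythagorean identity then gives simultaneously $\|y_k\|_2^2=\|u_k\|_2^2+\|v_k\|_2^2$ and, since $\Phi(u_k)-u_k$ and $\Phi(v_k)-v_k$ are again selfadjoint, $\|\Phi(y_k)-y_k\|_2^2=\|\Phi(u_k)-u_k\|_2^2+\|\Phi(v_k)-v_k\|_2^2$. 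For each $k$ I would retain whichever of $u_k,v_k$ has the larger Hilbert--Schmidt norm (hence norm at least $1/\sqrt2$) and normalize it; this yields selfadjoint unit vectors $x_k$ with $\|\Phi(x_k)-x_k\|_2\leq\sqrt2\,\|\Phi(y_k)-y_k\|_2\to0$, as required. I expect the passage to selfadjoint vectors to be the main obstacle, everything else being the standard contraction/boundary-spectrum argument supplied by part (i).
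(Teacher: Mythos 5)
Your argument for part (iii) is correct and follows essentially the paper's own route: the easy direction via failure of boundedness below; the converse via contractivity placing $1$ on the boundary of the spectrum, the standard fact that boundary points of the spectrum are approximate eigenvalues, and then passage to selfadjoint approximate eigenvectors. Your quantitative selfadjoint upgrade (the Pythagorean splitting plus keeping whichever of $u_k,v_k$ is larger, at the cost of a factor $\sqrt2$) is in fact a cleaner, more explicit version of the paper's terse ``pass to an appropriate subsequence of real or imaginary parts and normalize.'' One cosmetic slip: $\hsh^{\mathrm{sa}}$ and $i\hsh^{\mathrm{sa}}$ are \emph{not} orthogonal in the complex Hilbert space $\hsh$ (for selfadjoint $u,v$ one has $\inner{u}{iv}=-i\tr(vu)$, purely imaginary but usually nonzero); they are orthogonal only for the real inner product $\mathrm{Re}\inner{\cdot}{\cdot}$. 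The Pythagorean identities you invoke are nevertheless valid, since the cross terms $i\tr(uv)-i\tr(vu)$ cancel by cyclicity of the trace, so this does not damage the argument.

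The genuine gap is that the proposition has three parts and you prove only the third. Parts (i) and (ii) are nowhere established, and (i) is not a harmless convention you may take as given: it is the input your entire spectral argument rests on (contractivity is what confines $\sigma(\Phi)$ to the closed unit disk, and even the assertion $\Psi(\hsh)\subseteq\hsh$, i.e., that $\Phi$ is a well-defined operator on the Hilbert space $\hsh$ at all, is part of (i)). Both parts genuinely use the hypothesis $\sumj a_j^*a_j=1=\sumj a_ja_j^*$. For (i) one estimates $\|\Phi(x)\|_2^2=\tr\left(a^*x^{*(\infty)}aa^*x^{(\infty)}a\right)\leq\tr\left(a^*(x^*x)^{(\infty)}a\right)=\sumj\tr(a_j^*x^*xa_j)=\sumj\tr(xa_ja_j^*x^*)=\|x\|_2^2$, using $aa^*\leq1$ (from $\|a\|=1$, i.e., $a^*a=1$) for the inequality and $\sumj a_ja_j^*=1$ for the last equality. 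For (ii) one expands $\|ax-x^{(\infty)}a\|_2^2=\tr\left(x^*x+\Phi(x^*x)-\Phi(x)^*x-x^*\Phi(x)\right)$ via $a^*a=1$, uses the trace preservation $\tr(\Phi(x^*x))=\tr(x^*x)$ (again a consequence of $\sumj a_ja_j^*=1$) together with Cauchy--Schwarz to obtain the first inequality, and writes $\Phi(x)-x=a^*(x^{(\infty)}a-ax)$ with $\|a^*\|=1$ for the second. None of this is difficult, but as submitted the proof is incomplete.
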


\begin{proof}(i) Since $\|a\|=1$, we have that $aa^*\leq1$ (the identity operator on $\h^{\infty}$).
Using this and the equality $\sumj a_ja_j^*=1$,  we compute that for each $x\in\hsh$
\begin{align*}\|\Phi(x)\|_2^2=\tr\left(a^*x^{*(\infty)}aa^*x^{(\infty)}a\right)
\leq\tr\left(a^*(x^*x)^{(\infty)}a\right)\\=
\sumj\tr(a_j^*x^*xa_j)=\sumj\tr(xa_ja_j^*x^*)=\tr(xx^*)=\|x\|_2^2.\end{align*}

(ii) Using the relations $a^*a=1$, $aa^*\leq1$, $\tr(\Phi(x^*x))=\tr(x^*x)$ and the well-known properties of the trace
we have \begin{align*}\|ax-x^{(\infty)}a\|_2^2=\tr\left((ax-x^{(\infty)}a)^*(ax-x^{(\infty)}a)
\right)\\=\tr\left(x^*x+\Phi(x^*x)-\Phi(x)^*x-x^*\Phi(x)\right)\\
=\tr\left((x-\Phi(x))^*x+x^*(x-\Phi(x))\right)\\
\leq2\|x\|_2\|x-\Phi(x)\|_2.
\end{align*}
Similarly
$$\|\Phi(x)-x\|_2=\|a^*(x^{(\infty)}a-ax)\|_2\leq\|a^*\|\|x^{(\infty)}a-ax\|_2=\|ax-x^{(\infty)}a\|_2.$$

(iii) The existence of a sequence $(x_k)$ as in (iii) clearly implies that the map 
$\Phi-1$ is not invertible (in ${\rm B}(\bh)$). Conversely, if $\Phi-1$ is not invertible,
then $1$ is a boundary point of the spectrum of $\Phi$ since $\|\Phi\|\leq1$. But all boundary
points of the spectrum are approximate eigenvalues (\cite{Co}, p. 215), so there exists a sequence
of elements $x_k\in\hsh$ such that $\|x_2\|_2=1$ and $\lim\|\Phi(x_k)-x_k\|_2=0$.
By passing to an appropriate subsequence of real or imaginary parts of $x_k$ and normalizing
we can obtain a sequence of selfadjoint elements in $\hsh$ satisfying the condition in (iii).
\end{proof}

In the proof of the main result of this section we will need two simple facts
stated in the following remark.

\begin{re}\label{re1} If $x=(x_j)$ and $y=(y_j)$ are two operators from $\h$ to $\h^{\infty}$ of 
the Hilbert--Schmidt class (so that in particular $x_j, y_j\in\hsh$) then:

(i) $\|x\|_2=\|x^T\|_2$, where $x^T$ is the row $[x_j]$ regarded as the Hilbert--Schmidt
operator from $\h^{\infty}$ to $\h$.

(ii) $\tr(x^*y)=\sumj\tr(x_j^*y_j)$, where the series converges absolutely. 

Part (i) is immediate. To prove (ii), we choose an orthonormal basis $(\xi_k)$ of
$\h$ and compute that
$$\tr(x^*y)=\sum_{k=1}^{\infty}\sum_{j=1}^{\infty}\inner{x_j^*y_j\xi_k}{\xi_k}=
\sum_{j=1}^{\infty}\sum_{k=1}^{\infty}\inner{x_j^*y_j\xi_k}{\xi_k}=\sumj\tr(x_j^*y_j),$$
where the change of the order of summation is permissible since
$$\sum_{j,k=1}^{\infty}|\inner{x_j^*y_j\xi_k}{\xi_k}|\leq(\sum_{j,k=1}^{\infty}\|y_j\xi_k\|^2)^{1/2}
(\sum_{j,k=1}^{\infty}\|x_j\xi_k\|^2)^{1/2}=\|x\|_2\|y\|_2<\infty.$$
\end{re}

Recall  that a trace on a C$^*$-subalgebra $A\subseteq\bh$ is called
amenable if it can be extended to a state $\rho$ on $\bh$ such that $\rho(cd)=\rho(dc)$
for all $c\in A$ and $d\in\bh$ \cite{B}, \cite{BO}. We also recall the Powers-St\" ormer
inequality: $\|x-y\|_2^2\leq\|x^2-y^2\|_1$ for all positive $x,y\in{\rm C^2}(\h)$.
(A proof can be found for example in \cite{B}. Usually the inequality is used in the form
$\|xu-ux\|_2^2\leq\|x^2u-ux^2\|_1$ for positive $x\in\hsh$ and a unitary $u$.)

\begin{theorem}\label{th2}Let $A$ be the C$^*$-algebra generated by the identity and the operators
$a_j\in\bh$ satisfying (\ref{11}) and let $\Phi=\Phi_a$ be the restriction to $\hsh$ of the map
$\Psi$ defined by (\ref{12}). If $\Phi-1$ is not invertible then 
there exists a state $\rho$ on $\bh$ such that
\begin{equation}\label{13}\rho(b^Ta)=\rho(a^Tb)\ \mbox{for all}\ b=(b_j)\in{\rm B}(\h,\h^{\infty})\ 
\mbox{such that}\ b^T\in{\rm B}(\h^{\infty},\h).\end{equation}

Conversely, if $\rho$ is a state on $\bh$ such that $\rho(cd)=\rho(dc)$ for all $c\in A$ and
$d\in\bh$ and
\begin{equation}\label{14}\sumj\rho(a_j^*a_j)=1,\end{equation}
then the map $\Phi-1$ is not invertible.

Thus, if at least one of the series in (\ref{11}) is norm convergent, then the map $\Phi-1$
is not invertible if and only if $A$ has an amenable trace.
\end{theorem}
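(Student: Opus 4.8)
The plan is to prove the two implications separately and then assemble them into the stated equivalence using the definition of an amenable trace.

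For the first (direct) implication I would start from Proposition~\ref{pr1}(iii): non-invertibility of $\Phi-1$ supplies self-adjoint $x_k\in\hsh$ with $\|x_k\|_2=1$ and $\|\Phi(x_k)-x_k\|_2\to0$, and Proposition~\ref{pr1}(ii) upgrades this to $\|ax_k-x_k^{(\infty)}a\|_2\to0$, i.e. $\sumj\|a_jx_k-x_ka_j\|_2^2\to0$. To these approximate fixed points I attach the normal states $\rho_k(d):=\tr(x_kdx_k)=\tr(x_k^2d)$ on $\bh$ (each a state since $x_k^2\geq0$ and $\tr(x_k^2)=1$), and I let $\rho$ be a weak* cluster point of $(\rho_k)$ in the weak* compact state space of $\bh$. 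It then suffices to show, for each admissible $b=(b_j)$ (i.e. with $\sumj b_j^*b_j$ and $\sumj b_jb_j^*$ bounded), that $\rho_k(b^Ta)-\rho_k(a^Tb)\to0$, since weak* convergence along a subnet then forces $\rho(b^Ta)=\rho(a^Tb)$. Writing this difference as $\sumj\big(\tr(x_kb_ja_jx_k)-\tr(x_ka_jb_jx_k)\big)$ and using only $x_k^*=x_k$ and cyclicity of the trace, each summand rearranges to $\tr\big((x_kb_j+b_jx_k)(a_jx_k-x_ka_j)\big)$. Interpreting the two resulting sums through Remark~\ref{re1}(ii) as inner products of Hilbert--Schmidt columns and applying Cauchy--Schwarz, I would bound the whole expression by $(\|b\|+\|b^T\|)\,\|ax_k-x_k^{(\infty)}a\|_2$, because the columns $(b_j^*x_k)_j$ and $(x_kb_j^*)_j$ have Hilbert--Schmidt norm at most $\|b^T\|$ and $\|b\|$ (from boundedness of $\sumj b_jb_j^*$ and $\sumj b_j^*b_j$); this tends to $0$ by Proposition~\ref{pr1}(ii).

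For the converse I would turn the amenable trace into approximate eigenvectors of $\Phi$. Approximate $\rho$ weak* by density matrices $\omega_\lambda$; the relation $\rho(cd)=\rho(dc)$ then says $c\omega_\lambda-\omega_\lambda c\to0$ weakly in $\trh$ for each $c\in A$. For a finite set of unitaries $u_1,\dots,u_n\in A$ a Day-type convexity argument (weak and norm closures of the convex hull of $\{(u_i\omega_\lambda-\omega_\lambda u_i)_i\}$ coincide, by Hahn--Banach) produces a single density matrix $\omega$ with $\sum_i\|u_i\omega-\omega u_i\|_1$ as small as desired and with $\tr(\omega\,\cdot\,)$ close to $\rho$ on any prescribed finite set. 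Setting $x:=\omega^{1/2}$ (positive, $\|x\|_2=1$) and invoking the Powers--St\"ormer inequality gives $\|u_ix-xu_i\|_2^2\leq\|u_i\omega-\omega u_i\|_1$; writing each of $a_1,\dots,a_N$ as a finite combination of unitaries of $A$ then makes $\sum_{j\leq N}\|a_jx-xa_j\|_2^2$ small.

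The main obstacle is the infinite tail $\sum_{j>N}\|a_jx-xa_j\|_2^2$, which finitely many unitaries cannot reach. Using $\|a_jx-xa_j\|_2^2\leq2(\|a_jx\|_2^2+\|xa_j\|_2^2)$ with $\|a_jx\|_2^2=\tr(a_j^*a_j\omega)$ and $\|xa_j\|_2^2=\tr(a_ja_j^*\omega)$, and the fact that $\omega$ is trace class (so $\sumj\tr(a_j^*a_j\omega)=\tr((\sumj a_j^*a_j)\omega)=1$, likewise for $a_ja_j^*$), the tail is bounded by $2\big(2-\sum_{j\leq N}\tr((a_j^*a_j+a_ja_j^*)\omega)\big)$. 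Here the normalization hypothesis $\sumj\rho(a_j^*a_j)=1$, combined with $\rho(a_j^*a_j)=\rho(a_ja_j^*)$ (the trace property), is decisive: it lets me first fix $N$ with $\sum_{j\leq N}\rho(a_j^*a_j)>1-\varepsilon$ and then choose $\omega$ approximating $\rho$ on $\{a_j^*a_j,a_ja_j^*:j\leq N\}$, forcing the tail below a fixed multiple of $\varepsilon$. Combining the two ranges (via Remark~\ref{re1}(ii), $\|ax-x^{(\infty)}a\|_2^2=\sumj\|a_jx-xa_j\|_2^2$) yields self-adjoint $x$ with $\|x\|_2=1$ and $\|ax-x^{(\infty)}a\|_2$ arbitrarily small; Proposition~\ref{pr1}(ii) then makes $\|\Phi(x)-x\|_2$ small and Proposition~\ref{pr1}(iii) gives non-invertibility of $\Phi-1$.

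Finally, for the stated equivalence under norm convergence I would argue as follows. If $\Phi-1$ is not invertible, take $\rho$ from the direct part; choosing $b$ with a single nonzero entry $b_i=d$ (admissible, as $\sumj b_j^*b_j=d^*d$ and $\sumj b_jb_j^*=dd^*$) turns (\ref{13}) into $\rho(da_i)=\rho(a_id)$ for all $i$ and all $d\in\bh$, and conjugating with $\rho(y^*)=\overline{\rho(y)}$ yields the same for $a_i^*$. Since $\{c:\rho(cd)=\rho(dc)\ \forall d\}$ is a norm-closed subalgebra containing $1$ and all $a_i,a_i^*$, it contains $A$; thus $\rho$ is a hypertrace and $\rho|A$ is an amenable trace. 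Conversely, an amenable trace extends to a hypertrace $\rho$ on $\bh$; norm convergence of one of the series in (\ref{11}) together with $\rho(a_j^*a_j)=\rho(a_ja_j^*)$ gives $\sumj\rho(a_j^*a_j)=\rho(1)=1$, which is exactly (\ref{14}), so the converse implication applies and $\Phi-1$ is not invertible.
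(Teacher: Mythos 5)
Your proposal is correct and follows essentially the same route as the paper: the direct part uses the same approximate fixed points $x_k$, the states $\tr(x_k^2\,\cdot)$ and the same Cauchy--Schwarz bound $(\|b\|+\|b^T\|)\|ax_k-x_k^{(\infty)}a\|_2$, while the converse uses the same weak*-approximation by density matrices, a Mazur/Day convexification, Powers--St\"ormer via unitaries, and the hypothesis (\ref{14}) to kill the tail. The only (immaterial) variation is bookkeeping: you control the tail by approximating $\rho$ on the finitely many head elements $a_j^*a_j, a_ja_j^*$ ($j\leq N$) and using $\sumj\tr(a_j^*a_j\omega)=1$, whereas the paper approximates $\rho$ directly on the single tail operator $\sum_{j>m}(a_ja_j^*+a_j^*a_j)$; you are also slightly more explicit than the paper in checking that the hypertrace property on the generators passes to the norm-closed algebra $A$.
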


\begin{proof}If $\Phi-1$ is not invertible then by Proposition \ref{pr1} there exists a
sequence of selfadjoint elements $x_k$ in $\hsh$ with $\|x_k\|_2=1$ and 
$$\lim_{k\to\infty}\|ax_k-x_k^{(\infty)}a\|_2=0.$$ 
Let $\rho_k$ be the state on $\bh$ defined by $\rho_k(d)=\tr(dx_k^2)$ and let $\rho$ be
a weak* limit point of the sequence $(\rho_k)$. Note that for each $x\in\hsh$ and 
$b=(b_j)\in{\rm B}(\h,\h^{\infty})$ we have $\tr(a^Tbx^2)=\tr(xa^Tbx)$ and (by Remark
\ref{re1}(ii) since $ax$ and $(x^*b_j)$ are in ${\rm C}^2(\h,\h^{\infty})$)
$$\tr(b^Tx^{(\infty)}ax)=\sumj\tr(b_jxa_jx)=\sumj\tr(a_jxb_jx)=\tr(a^Tx^{(\infty)}bx).$$
Using this and Remark \ref{re1}(i) we now compute that
\begin{align*}|\tr(b^Tax_k^2)-\tr(a^Tbx_k^2)|\\
=\left|\tr\left(b^T(ax_k-x_k^{(\infty)}a)x_k\right)+
\tr\left((a^Tx_k^{(\infty)}-x_ka^T)bx_k\right)\right|\\
\leq\|b^T\|\|ax_k-x_k^{(\infty)}a\|_2+\|a^Tx_k^{(\infty)}-x_ka^T\|_2\|b\|\\
=(\|b\|+\|b^T\|)\|ax_k-x_k^{(\infty)}a\|_2\stackrel{k\to\infty}{\longrightarrow}0.
\end{align*}
Since $\rho$ is a weak* limit point of $(\rho_k)$, this implies that 
$\rho(b^Ta)=\rho(a^Tb)$. In particular $\rho(a_jd)=\rho(da_j)$ for all $a_j$ and all
$d\in\bh$, which implies that $\rho|A$ is an amenable trace.

Suppose now conversely, that $\rho$ is a state on $\bh$ satisfying (\ref{14}) and 
$\rho(cd)=\rho(dc)$ for all $c\in A$ and $d\in\bh$. Since the series in (\ref{14})
is convergent, given $\varepsilon>0$, there exists $m\in\bn$ such that
\begin{equation}\label{15}\sum_{j=m+1}^{\infty}\rho(a_ja_j^*)=\sum_{j=m+1}^{\infty}
\rho(a_j^*a_j)<\frac{\varepsilon}{8}.
\end{equation}
Since normal states are weak* dense in the state space of $\bh$, there exists a net
of positive operators $y_k\in\trh$ with the trace norm $\|y_k\|_1=1$ such that the
states $\rho_k(d):=\tr(dy_k)$ ($d\in\bh$) weak* converge to $\rho$. By passing to a
subnet we may assume that
\begin{equation}\label{16}|(\rho_k-\rho)\left(\sum_{j=m+1}^{\infty}(a_ja_j^*+a_j^*a_j)\right)
<\frac{\varepsilon}{4}.
\end{equation}
Let $a_{(m)}=(a_1,\ldots,a_m)\in{\rm B}(\h,\h^m)$. Observe that the trace class 
operators $a_{(m)}y_k-y_k^{(m)}a_{(m)}\in{\rm T}(\h,\h^m)$ converge weakly to $0$ 
since for all $d=[d_1,\ldots,d_m]\in{\rm B}(\h^m,\h)$ we have (denoting by $y^{(m)}$
the direct sum of $m$ copies of an operator $y$) 
\begin{align*}\tr(d(a_{(m)}y_k-y_k^{(m)}a_{(m)}))=\sum_{j=1}^m\tr(d_j(a_jy_k-y_ka_j))\\
=\sum_{j=1}^m\tr((d_ja_j-a_jd_j)y_k)\stackrel{k}{\longrightarrow}\sum_{j=1}^m\rho(d_ja_j
-a_jd_j)=0.
\end{align*}
Therefore suitable convex combinations of  operators $a_{(m)}y_k-y_k^{(m)}a_{(m)}$ must converge to $0$ in norm; 
thus, replacing the $y_k$'s by suitable convex combinations, we may assume that
$$\|a_{(m)}y_k-y_k^{(m)}a_{(m)}\|_1\stackrel{k}{\longrightarrow}0.$$
Let $x_k=y_k^{1/2}$. It follows from the Powers--St\" ormer inequality (by expressing the
components $a_j$ of $a_{(m)}$ as linear combinations of unitaries) that 
\begin{equation}\label{17}\sum_{j=1}^m\|a_jx_k-x_ka_j\|_2\stackrel{k}{\longrightarrow}0.
\end{equation}
Now we can estimate
\begin{align*}\|ax_k-x_k^{(\infty)}a\|_2^2=\sum_{j=1}^{m}\|a_jx_k-x_ka_j\|_2^2+
\sum_{j=m+1}^{\infty}\|a_jx_k-x_ka_j\|_2^2\\
\leq\sum_{j=1}^m\|a_jx_k-x_ka_j\|_2^2+2\sum_{j=m+1}^{\infty}(\|a_jx_k\|_2^2+\|x_ka_j\|_2^2)\\
=\sum_{j=1}^m\|a_jx_k-x_ka_j\|_2^2+2\sum_{j=m+1}^{\infty}(\tr(a_jx_k^2a_j^*+a_j^*x_k^2a_j))\\
=\sum_{j=1}^m\|a_jx_k-x_ka_j\|_2^2+2\rho_k(\sum_{j=m+1}^{\infty}a_j^*a_j+a_ja_j^*).
\end{align*}
Using  (\ref{15}) and (\ref{16}) it follows now that
$$\|ax_k-x_k^{(\infty)}a\|_2^2<\sum_{j=1}^m\|a_jx_k-x_ka_j\|_2^2+\varepsilon,$$
hence (\ref{17}) implies that $\|ax_k-x_k^{(\infty)}a\|_2^2<\varepsilon$ for some $k$.
Since $\varepsilon>0$ was arbitrary, Proposition \ref{pr1} tells us that the map
$\Phi-1$ is not invertible.

If the series $\sumj a_j^*a_j$ is norm convergent to $1$ then the condition (\ref{14})
is automatically satisfied for any state $\rho$. If $\rho|A$ is tracial then the same
conclusion holds if we assume the norm convergence of the series $\sumj a_ja_j^*=1$. Finally,
observe that for any state $\rho$ on $\bh$ satisfying $\rho(cd)=\rho(dc)$ for all
$c\in A$ and $d\in\bh$ the condition (\ref{14}) implies (\ref{13}) since
\begin{align*}|\rho(\sum_{j=m}^{\infty}b_ja_j)|\leq\sum_{j=m}^{\infty}|\rho(b_ja_j|
\leq\sum_{j=m}^{\infty}\rho(b_jb_j^*)^{1/2}\rho(a_j^*a_j)^{1/2}\\
\leq\|b\|(\sum_{j=m}^{\infty}\rho(a_j^*a_j))^{1/2}\stackrel{m\to\infty}{\longrightarrow}0\end{align*}
and similarly $|\rho(\sum_{j=m}^{\infty}a_jb_j)|\stackrel{m\to\infty}{\longrightarrow}0$.
\end{proof}

\begin{co}\label{co12}If the von Neumann algebra $\overline{A}$ generated by the operators $a_j$
(satisfying (\ref{11})) is finite and injective then the operator $\Phi_a-1$ is not
invertible. 
\end{co}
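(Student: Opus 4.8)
The plan is to apply the converse direction of Theorem \ref{th2}, so the goal is to produce a state $\rho$ on $\bh$ that is tracial on $A$ in the required bimodule sense and that satisfies (\ref{14}). The two hypotheses on $\overline{A}$ furnish exactly the two ingredients needed: injectivity yields an averaging projection onto $\overline{A}$, and finiteness yields a normal trace.

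First I would invoke injectivity of $\overline{A}$ to obtain a conditional expectation $E\colon\bh\to\overline{A}$, i.e.\ a unital completely positive projection; by Tomiyama's theorem $E$ is automatically an $\overline{A}$-bimodule map, so $E(c_1dc_2)=c_1E(d)c_2$ for all $c_1,c_2\in\overline{A}$ and $d\in\bh$. Next, since $\overline{A}$ is finite it admits a faithful normal center-valued trace; composing it with any normal state on its center produces a normal tracial state $\tau$ on $\overline{A}$. Define $\rho:=\tau\circ E$, which is a state on $\bh$.

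The tracial bimodule property is then immediate: for $c\in A\subseteq\overline{A}$ and $d\in\bh$ one has $\rho(cd)=\tau(cE(d))$ and $\rho(dc)=\tau(E(d)c)$ by the bimodule property of $E$, and these agree because $\tau$ is a trace on $\overline{A}$ while $c,E(d)\in\overline{A}$. It remains to verify (\ref{14}). Since each $a_j^*a_j$ lies in $A\subseteq\overline{A}$, we have $E(a_j^*a_j)=a_j^*a_j$, and hence $\sumj\rho(a_j^*a_j)=\sumj\tau(a_j^*a_j)$. Here the one delicate point enters: the partial sums $\sum_{j=1}^n a_j^*a_j$ form an increasing net of positive operators bounded by and converging weak* to $1$ by (\ref{11}), so normality of $\tau$ (order continuity on bounded increasing nets) gives $\sumj\tau(a_j^*a_j)=\tau(1)=1$. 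With $\rho$ in hand, Theorem \ref{th2} shows that $\Phi_a-1$ is not invertible.

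I expect the verification of (\ref{14}) to be the place where the hypotheses are genuinely used in a nontrivial way: one must ensure that the trace produced is normal, so that the weak*-convergence of the series in (\ref{11}) can be transferred to convergence of the scalar series. Note that this argument does not require the norm-convergence hypothesis appearing in the last sentence of Theorem \ref{th2}, since (\ref{14}) is checked directly rather than deduced from it.
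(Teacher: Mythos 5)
Your proof is correct and follows the paper's own argument exactly: the paper likewise sets $\rho=\tau E$ with $E\colon\bh\to\overline{A}$ a conditional expectation and $\tau$ a normal tracial state on $\overline{A}$, using the $\overline{A}$-bimodule property of $E$ for the trace condition and (implicitly) normality of $\tau$ for (\ref{14}). Your extra details --- Tomiyama's theorem, the center-valued trace, and the observation that norm convergence of the series is not needed --- are all accurate elaborations of steps the paper leaves tacit.
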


\begin{proof}Let $E:\bh\to\overline{A}$ be a conditional expectation, $\tau$ any normal
tracial state on $\overline{A}$ and $\rho=\tau E$. The state  $\rho=\tau E$ satisfies the 
condition (\ref{14}) and $\rho(cd)=\rho(dc)$ for all $c\in\weakc{A}$ and $d\in\bh$
(since $E$ is an $\weakc{A}$-bimodule map), hence the map $\Phi_a-1$ is not invertible.
\end{proof}

Given an arbitrary von Neumann algebra $R\subseteq\bh$, Haagerup \cite{Ha} proved that if 
the norm of every elementary operator on $\hsh$ of
the form $x\mapsto\sum_{j=1}^nu_jxu_j^*$, where the coefficients $u_j\in R$ are unitary,
is equal to $1$, then $R$ is finite and injective. The author does not know if the same
conclusion holds under the assumption that the norm of elementary operators of
the form $x\mapsto\sum_{j=1}^na_jxa_j$ is equal to $1$ for all positive $a_j\in R$.

\begin{re}We have seen in the proof of Theorem \ref{th2} that the condition (\ref{14})
implies (\ref{13}). But the two conditions are not equivalent. Indeed, let $R\subseteq\bh$ be
an abelian infinite dimensional von Neumann algebra, $\omega$ any non-normal state on $R$ and
$E:\bh\to R$ a conditional expectation. Since $\omega$ is not normal there exists in $R$
a sequence $(a_j)$ of mutually
orthogonal projections with the sum $1$ such that $\sumj\omega(a_j)<1$. Let $\rho=\omega E$,
a state on $\bh$. Even though $E$ is not necessarily 
weak* continuous the equalities
$$E(b^Ta)=\sumj E(b_j)a_j=\sumj a_jE(b_j)=E(a^Tb)$$
hold for all $b=(b_j)$ ($b_j\in\bh$)
such that the two sums $\sumj b_j^*b_j$ and $\sumj b_jb_j^*$ are weak* convergent. 
(This is so because $E$ is a completely positive $\overline{A}$-bimodule map; see
\cite{ER1} or \cite{HW}.)  Hence
$\rho(b^Ta)=\omega(E(b^Ta))=\omega(E(a^Tb))
=\rho(a^Tb)$. But $\sumj\rho(a_j^*a_j)=\sumj\omega(a_j)<1$.
\end{re}

We show now by an example that the condition (\ref{13}) is not automatically fulfilled 
by states satisfying $\rho(cd)=\rho(dc)$ for all $c\in A$ and $d\in\bh$.

\begin{ex}Choose an orthonormal basis $(\xi_j)$ ($j=1,2\ldots$) of $\h$ and let $a_j$ be the rank $1$ 
orthogonal projection onto $\bc\xi_j$. Then the C$^*$-algebra $A$, generated by $(a_j)$
and $1$, is the C$^*$-algebra of all convergent sequences acting as diagonal operators.
For each $j$ let $b_j$ be a rank $1$ partial isometry such that $b_jb_j^*=a_{2j}$ and
$b_j^*b_j=a_j$. Then
$$a^Tb=\sumj a_jb_j=0,$$
while
$$b^Ta=\sumj b_ja_j=\sumj b_j=:v$$
is an isometry with the range projection $p=vv^*=\sumj a_{2j}$ of infinite rank. Let
$q:\bh\to\bh/\kh=C(H)$ be the quotient map, $\theta$ a state on $C(H)$ such that 
$\theta(q(v))\ne0$, and $\rho:=\theta q$. Then $q(c)$ is a scalar for each $c\in A$,
hence for each $d\in\bh$
$$\rho(cd)=\theta(q(c)q(d))=\theta(q(c))\theta(q(d))=\rho(dc).$$
But nevertheless $\rho(b^Ta)=\rho(v)\ne0=\rho(a^Tb)$.
\end{ex}

\medskip

{\bf Problem.} Is the necessary condition (\ref{13}) also sufficient
for the conclusion of Theorem \ref{th2}? In other words, may the stronger 
condition (\ref{14}) be replaced by (\ref{13})? 

\medskip 
The answer is affirmative at least when $a=(a_j)$ is such that 
the operator $x^{(\infty)}a\in{\rm B}(\h,\h^{\infty})$ is of trace class for a dense
set of trace class operators $x\in\trh$. Namely, in this case we
can modify the proof of Theorem \ref{th2} as follows. First we approximate the state $\rho$ 
in Theorem \ref{th2} by normal states coming from
operators $y_k\in\trh$ such that the operators $y_k^{(\infty)}a\in {\rm B}(\h,\h^{\infty})$ are
of trace class. Then we verify that the sequence $(y_k^{(\infty)}a-ay_k)$ converges weakly 
to $0$. Finally we  show that 
$\|\sqrt{y_k}^{(\infty)}a-a\sqrt{y_k}\|_2\stackrel{k\to\infty}{\longrightarrow}0$.
For the last step wee need the following consequence of the Powers-St\" ormer inequality.

\begin{pr}For all operators $b\in\bkh$ and positive operators $x\in\trh$, $y\in\trk$
the inequality
\begin{equation}\label{PS}\|by-xb\|_2^2\leq \gamma\|by^2-x^2b\|_1\|b\|\end{equation}
holds, where $\gamma=\frac{8}{9}\sqrt{3}$.
\end{pr}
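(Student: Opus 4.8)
The plan is to deduce the inequality from the Powers--St\" ormer inequality in its unitary commutator form $\|xu-ux\|_2^2\le\|x^2u-ux^2\|_1$, applied on the direct sum $\h\oplus\k$ to the positive operator $X=\begin{pmatrix}x&0\\0&y\end{pmatrix}$ and to a unitary dilation of a scalar multiple of $b$. First I would observe that the asserted inequality is homogeneous of degree two in $b$ (both sides scale like $\|b\|^2$), so it is harmless to keep a scaling parameter $r\in(0,1/\|b\|)$, writing $\rho:=r\|b\|<1$, and to dilate $rb$ rather than $b$. Throughout set $T:=by-xb\in\hsh$ and $c:=by^2-x^2b$ (both finite in the relevant norms because $x\in\trh$ and $y\in\trk$ are of trace class). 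The whole point of the dilation is that the first--order commutator will expose $T$ and the squared commutator will expose $c$, matching the two sides of Powers--St\" ormer.

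Concretely, put $D_1=(1-r^2bb^*)^{1/2}$, $D_2=(1-r^2b^*b)^{1/2}$ and $U_r=\begin{pmatrix}D_1&rb\\-rb^*&D_2\end{pmatrix}$, a unitary on $\h\oplus\k$. Applying Powers--St\" ormer to $X$ and $U_r$ gives $\|[X,U_r]\|_2^2\le\|[X^2,U_r]\|_1$, and a block computation yields
\[[X,U_r]=\begin{pmatrix}[x,D_1]&-rT\\-rT^*&[y,D_2]\end{pmatrix},\qquad [X^2,U_r]=\begin{pmatrix}[x^2,D_1]&-rc\\-rc^*&[y^2,D_2]\end{pmatrix}.\]
Discarding the nonnegative diagonal contributions on the left gives $\|[X,U_r]\|_2^2\ge 2r^2\|T\|_2^2$, while splitting the right--hand matrix into its diagonal and off--diagonal parts gives $\|[X^2,U_r]\|_1\le 2r\|c\|_1+\|[x^2,D_1]\|_1+\|[y^2,D_2]\|_1$. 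Everything thus reduces to estimating the two defect commutators $\|[x^2,D_1]\|_1$ and $\|[y^2,D_2]\|_1$ by $\|c\|_1$.

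The engine for this is the elementary identity $[x^2,bb^*]=bc^*-cb^*$ (and symmetrically $[y^2,b^*b]=c^*b-b^*c$), verified by substituting $x^2b=by^2-c$; it gives $\|[x^2,bb^*]\|_1\le 2\|b\|\|c\|_1$. To pass from $bb^*$ to $D_1=(1-r^2bb^*)^{1/2}$ I would use the integral representation $A^{1/2}=\frac1\pi\int_0^\infty A(A+s)^{-1}s^{-1/2}\,ds$ with $A=1-r^2bb^*\ge(1-\rho^2)\cdot 1$, which after one commutation yields
\[[x^2,D_1]=-\frac{r^2}{\pi}\int_0^\infty s^{1/2}(A+s)^{-1}(bc^*-cb^*)(A+s)^{-1}\,ds.\]
Estimating each resolvent by $(1-\rho^2+s)^{-1}$ and using $\int_0^\infty s^{1/2}(1-\rho^2+s)^{-2}\,ds=\tfrac\pi2(1-\rho^2)^{-1/2}$ gives $\|[x^2,D_1]\|_1\le r^2(1-\rho^2)^{-1/2}\|b\|\|c\|_1$, and likewise for $D_2$.

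Combining the three estimates produces $\|T\|_2^2\le\|b\|\|c\|_1\,F(\rho)$ with $F(\rho)=\rho^{-1}+(1-\rho^2)^{-1/2}$, and the constant then comes from minimizing $F$ over $\rho\in(0,1)$; the crude bound above already gives the explicit finite value $F(2^{-1/2})=2\sqrt2$. To reach the sharp constant $\tfrac89\sqrt3$ one must replace the crude resolvent estimate of the defect commutator by the sharp trace--norm (double operator integral) bound for $[x^2,(1-r^2bb^*)^{1/2}]$ and re--optimize in $r$; the suggestive numerology $\tfrac89\sqrt3=4\max_{0\le t\le1}t\sqrt{1-t}$, the maximum $\tfrac{2}{3\sqrt3}$ being attained at $t=\tfrac23$, indicates that the optimal balance between the off--diagonal term (of size $\rho^{-1}$, carrying $c$) and the defect term is governed by this scalar extremal problem. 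I expect this sharpening of the defect--commutator estimate, together with the matching choice of $r$, to be the main obstacle, whereas the dilation and the identity $[x^2,bb^*]=bc^*-cb^*$ supply the structural skeleton of the proof.
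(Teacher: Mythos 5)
Your dilation argument is internally correct as far as you carry it out, but it proves a strictly weaker statement than the proposition. Chasing your own estimates: $2r^2\|T\|_2^2\leq\|[X,U_r]\|_2^2\leq\|[X^2,U_r]\|_1\leq 2r\|c\|_1+2r^2(1-\rho^2)^{-1/2}\|b\|\|c\|_1$, hence $\|T\|_2^2\leq F(\rho)\|b\|\|c\|_1$ with $F(\rho)=\rho^{-1}+(1-\rho^2)^{-1/2}$, whose minimum over $\rho\in(0,1)$ is $2\sqrt{2}\approx 2.83$ at $\rho=2^{-1/2}$ --- not the asserted $\gamma=\frac{8}{9}\sqrt{3}\approx 1.54$. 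You acknowledge this and defer the sharp constant to a ``sharp double operator integral bound'' on the defect commutators $[x^2,(1-r^2bb^*)^{1/2}]$, but that hoped-for sharpening gains nothing: the divided difference of $f(t)=(1-r^2t)^{1/2}$ is $-r^2/(\alpha(s)+\alpha(t))$ with $\alpha(t)=\sqrt{1-r^2t}\geq\sqrt{1-\rho^2}$, and the factorization $1/(\alpha(s)+\alpha(t))=\int_0^{\infty}e^{-u\alpha(s)}e^{-u\alpha(t)}\,du$ shows its trace-norm multiplier bound is $\frac{1}{2}r^2(1-\rho^2)^{-1/2}$ (sharp on the diagonal when $\|b\|^2\in\sigma(bb^*)$); combined with $\|[x^2,bb^*]\|_1\leq 2\|b\|\|c\|_1$ this reproduces exactly your crude resolvent estimate $r^2(1-\rho^2)^{-1/2}\|b\|\|c\|_1$. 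So the bottleneck $2\sqrt{2}$ is intrinsic to the scheme (the defect terms are a structural cost of dilating), not to the crudeness of any one estimate, and the ``numerology'' $\frac{8}{9}\sqrt{3}=4\max_t t\sqrt{1-t}$ does not connect to your functional $F$. As a minor point, unitarity of $U_r$ also needs the standard intertwining $D_1b=bD_2$, i.e. $bf(b^*b)=f(bb^*)b$, which you should state.

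The paper reaches the constant $\frac{8}{9}\sqrt{3}$ by avoiding dilations altogether. It uses your $2\times 2$ matrices for a different purpose --- to reduce to $b=b^*$ and $y=x$ (the factors of $2$ in $\|\cdot\|_2^2$ and $\|\cdot\|_1$ cancel), after which one may shift $b$ by a scalar so that $\pm\|b\|$ both lie in its spectrum --- and then applies Powers--St\" ormer to the Cayley transforms $u_t=(b-ti)(b+ti)^{-1}$, $t>0$. Since $u_t=1-2ti(b+ti)^{-1}$, one has $u_tx-xu_t=(b+ti)^{-1}\bigl(2ti(bx-xb)\bigr)(b+ti)^{-1}$ and likewise for $x^2$: both commutators are transferred \emph{exactly}, with no defect terms, and undoing the conjugation costs only $\|b+ti\|^4\|(b+ti)^{-1}\|^2\leq(\beta^2+t^2)^2t^{-2}$ with $\beta=\|b\|$. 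This yields $\|bx-xb\|_2^2\leq\frac{(\beta^2+t^2)^2}{2t^3}\|bx^2-x^2b\|_1$, and minimizing at $t=\sqrt{3}\,\beta$ gives precisely $\frac{8}{9}\sqrt{3}\,\beta$. Your identity $[x^2,bb^*]=bc^*-cb^*$ and the integral representation of the square root are fine, and your constant $2\sqrt{2}$ would in fact suffice for the application the paper makes of this proposition (any finite $\gamma$ does); but as a proof of the proposition as stated the proposal is incomplete, and the completion you sketch cannot close the gap.
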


\begin{proof}By considering the operator
$$\left[\begin{array}{ll}
0&b\\
b^*&0\end{array}\right]$$
instead of $b$ and $$\left[\begin{array}{ll}
x&0\\
0&y\end{array}\right]$$
instead of both $x$ and $y$, the proof can be reduced immediately to the case when $b=b^*$
and $y=x$. (Further, in this case we may replace $b$ by $b+s1$ for a suitable scalar $s$ so that we may
assume that both $\|b\|$ and $-\|b\|$ are in the spectrum of $b$.) Denote $\beta=\|b\|$
and for  $t\in\br\setminus\{0\}$ let
$$u_t=(b-ti)(b+ti)^{-1},\ \ \ \mbox{so that}\ \ \ b=ti(1+u_t)(1-u_t)^{-1}.$$
Since $u_t$ is unitary, we have by the Powers-St\" ormer inequality
$$\|u_tx-xu_t\|_2^2\leq\|u_tx^2-x^2u_t\|_1,$$
which can be rewritten as
\begin{equation}\label{18}\|(b+ti)^{-1}z_t(b+ti)^{-1}\|_2^2\leq\|2ti(b+ti)^{-1}(bx^2-x^2b)
(b+ti)^{-1}\|_1,\end{equation}
where $z_t:=2ti(bx-xb)$. Since $\|z_t\|_2\leq\|b+ti\|^2\|(b+ti)^{-1}z_t(b+ti)^{-1}\|_2$,
(\ref{18}) implies that
$$\|z_t\|_2^2\leq2t\|b+ti\|^4\|((b+ti)^{-1}\|^2\|bx^2-x^2b\|_1.$$
Thus (since $\|(b+ti)^{-1}\|^2\leq t^{-2}$ and $\|b+ti\|^2\leq\beta^2+t^2$)
$$\|bx-xb\|_2^2\leq\frac{(\beta^2+t^2)^2}{2t^3}\|bx^2-x^2b\|_1.$$
Taking the minimum over $t$ of the right-hand side of this inequality, we obtain the
desired estimate (\ref{PS}).
\end{proof}

\section{On the fixed points of the map $\Psi_a$}

As we indicated already in the Introduction, Theorem \ref{th2} implies the following corollary.

\begin{co}\label{co21}With the notation as in Theorem \ref{th2}, suppose that  
the C$^*$-algebra $A$ has no amenable traces and that the two series $\sumj a_j^*a_j=1=
\sumj a_ja_j^*$ are norm convergent. If there exists an operator $y\in\bh$ such that the
operator $y^{(\infty)}a-ay$ is in the Hilbert-Schmidt class and $y$ is not in 
$\com{A}+\hsh$, then the operator $\Psi=\Psi_a$
defined on $\bh$ by $\Psi_a(x)=\sumj a_j^*xa_j$ has fixed points which are
not in $\com{A}$.
\end{co}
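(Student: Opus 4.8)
The plan is to read off from Theorem \ref{th2} that $\Phi-1$ is \emph{invertible} on $\hsh$, and then to correct the given operator $y$ by a Hilbert--Schmidt perturbation so that it becomes an exact fixed point of $\Psi$, while staying outside $\com{A}$.

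First I would check that the hypotheses feed directly into the last assertion of Theorem \ref{th2}. Since at least one of the series in (\ref{11}) is norm convergent and $A$ has no amenable trace, that assertion tells us that $\Phi-1$ is invertible as a bounded operator on $\hsh$. This invertibility is the engine of the argument, so everything else is arranged to reduce the problem to solving a linear equation in $\hsh$.

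Next comes what I expect to be the one genuinely substantive step: the observation that the ``defect'' $\Psi(y)-y$ already lies in $\hsh$. Writing $w:=y^{(\infty)}a-ay$, which is Hilbert--Schmidt from $\h$ to $\h^{\infty}$ by assumption, and using $a^*a=1$, one computes
\begin{equation*}\Psi(y)-y=a^*y^{(\infty)}a-a^*ay=a^*(y^{(\infty)}a-ay)=a^*w.\end{equation*}
Because $a^*$ is bounded and $w$ is Hilbert--Schmidt, the product $a^*w$ is Hilbert--Schmidt; hence $\Psi(y)-y\in\hsh$. The point to be careful about here is precisely that the hypothesis is phrased in terms of $y^{(\infty)}a-ay$ (not $\Psi(y)-y$ directly), so one must pass from the commutator condition to the defect condition via $a^*a=1$ and the ideal property of $\hsh$.

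With the defect in $\hsh$, I would then set $z:=-(\Phi-1)^{-1}\bigl(\Psi(y)-y\bigr)\in\hsh$, so that $(\Phi-1)(z)=-(\Psi(y)-y)$, i.e. $\Phi(z)-z=-(\Psi(y)-y)$. Since $\Psi$ is linear and restricts to $\Phi$ on $\hsh$, this gives $\Psi(y+z)-(y+z)=(\Psi(y)-y)+(\Phi(z)-z)=0$, so that $y+z$ is a fixed point of $\Psi$. Finally $y+z\notin\com{A}$, for otherwise $y=(y+z)-z$ would lie in $\com{A}+\hsh$, contradicting the hypothesis; thus $y+z\in\fixa\setminus\com{A}$, which is the desired conclusion. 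The crux of the proof is thus entirely in the invertibility of $\Phi-1$ (supplied by Theorem \ref{th2}) together with the defect computation; the remaining manipulations are purely formal.
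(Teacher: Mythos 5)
Your proof is correct and follows essentially the same route as the paper's: the identity $\Psi(y)-y=a^*\bigl(y^{(\infty)}a-ay\bigr)$ (using $a^*a=1$ and the ideal property of $\hsh$) puts the defect in $\hsh$, the last assertion of Theorem \ref{th2} gives invertibility of $\Phi-1$, and solving $(\Phi-1)(z)=-(\Psi(y)-y)$ produces the fixed point $y+z\notin\com{A}$. No gaps; the sign conventions and the final exclusion argument match the paper's proof exactly.
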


\begin{proof}Observe that $y-\Psi(y)\in\hsh$ since 
$$y-\Psi(y)=a^*(ay-y^{(\infty)}a)$$
and $y^{(\infty)}a-ya$ is in the Hilbert-Schmidt class by the hypothesis. By Theorem \ref{th2}
the map $(\Psi-1)|\hsh$ is invertible, hence there exists a $z\in\hsh$ such that
$(\Psi-1)(z)=y-\Psi(y)$. This means that $\Psi(y+z)=y+z$. Hence $x:=y+z$ is a fixed point
of $\Psi$, and $x$ is not in $\com{A}$ since $y\notin\com{A}+\hsh$ and $z\in\hsh$.
\end{proof}

Now we give an example which satisfies the conditions of Corollary \ref{co21} and solves a 
problem left open in \cite{AGG}.

\begin{ex}Let $v_i$ ($i=1,2$) be the isometries defined on $\h=\ell^2(\bn)$ by
$$v_1e_j=e_{2j}\ \ \ \mbox{and}\ \ \ v_2e_j=e_{2j+1}\ \ \ (j=0,1,2,\ldots),$$
where $(e_j)$ is an orthonormal basis of $\h$.
Then $v_1v_1^*+v_2v_2^*=1$ and the C$^*$-algebra $A$ generated by $\{v_1,v_2\}$ is the Cuntz 
algebra $O(2)$ (defined in \cite{Cu} or \cite{EK}), which has no tracial states (and is nuclear). 

To show that $A$ is irreducible, choose any $d\in\com{A}$ and let 
$$de_0=\sum_{j=0}^{\infty}\alpha_je_j\ \ \ (\alpha_j\in\bc).$$
Then 
$$\sum_{j=0}^{\infty}\alpha_je_j=de_0=dv_1^*e_0=v_1^*de_0=
\sum_{j=0}^{\infty}\alpha_{2j}e_j,$$
which implies that $\alpha_{j}=\alpha_{2j}$ for all $j$.
Similarly, from $0=dv_2^*e_0=v_2^*de_0=\sum_{j=0}^{\infty}\alpha_{2j+1}e_j$ we see that 
$\alpha_{2j+1}=0$ for all $j$. It follows that $\alpha_j=0$ for all $j>0$.
Thus $de_0=\alpha_0e_0$ and consequently $d(v_1^{k_1}v_2^{k_2}v_1^{k_3}\ldots)e_0=
(v_1^{k_1}v_2^{k_2}v_1^{k_3}\ldots)de_0=\alpha_0(v_1^{k_1}v_2^{k_2}\ldots)e_0$
for any sequence $k_1,k_2,\ldots$ in $\bn$. Since the linear span of vectors of the form 
$(v_1^{k_1}v_2^{k_2}\ldots)e_0$ is dense in $\h$, it follows that $d=\alpha_01$.

We will show that there exists a positive diagonal operator $y\in\bh$ such that 
$$yv_2=v_2y,\ \ yv_1-v_1y\in\hsh,\ \ \mbox{but}\ \ y\notin\bc1+\hsh=\com{A}+\hsh.$$
Let $ye_j=t_je_j$, where $t_j$ are nonnegative scalars to be specified. The condition
$yv_2=v_2y$ means that 
\begin{equation}\label{21}t_{2j+1}=t_j\ \ \ (j=0,1,2,\ldots).\end{equation}
On the other hand, the condition $yv_1-v_1y\in\hsh$ means that
\begin{equation}\label{22}\sum_{j=0}^{\infty}(t_{2j}-t_j)^2<\infty.\end{equation}
To satisfy these two conditions, choose $t_j$, for example, as follows. 
If $j$ is of the form $j=2^k$ ($k\in\bn$) let $t_j=(k+1)^{-1/2}$. If $j$ is not a power
of $2$ define $t_j$ recursively by
$$t_j=\left\{\begin{array}{ll}
t_{\frac{j}{2}},&\mbox{if $j$ is even};\\
t_{\frac{j-1}{2}},&\mbox{if $j$ is odd}.\end{array}\right.$$
Then $t_{2j+1}=t_j$ for all $j\in\bn$, so (\ref{21}) holds. Further, $t_{2j}=t_j$ 
for all $j$ which are not
powers of $2$, hence the sum in (\ref{22}) reduces to
$$\sum_{k=0}^{\infty}(\frac{1}{\sqrt{k+1}}-\frac{1}{\sqrt{k+2}})^2<\infty.$$
The so defined operator $y$ is not in $\bc1+\hsh$ since the series
$\sum_{j=0}^{\infty}(t_j+\alpha)^2$ diverges for all $\alpha\in\bc$.

Finally, we write $v_1$ and $v_2$ as linear combinations of positive elements 
$a_j\in A$ ($j=1,\ldots,8$)  such that $\sum_{j=1}^8 a_j^2\leq 1$. Define 
$a_0=(1-\sum_{j=1}^8a_j^2)^{1/2}$
and $a=(a_0,\ldots,a_8)$. Then $\Psi_a$ is a L\" uders operator for which not all fixed 
points are in $\com{A}$ ($=\bc1$), since $y$ commutes modulo $\hsh$ with all $a_j$ and does
not commute with all $a_j$.
\end{ex}

\section{The case of commuting operators}

In this section we study the spectrum and fixed points of normal completely bounded
maps on $\bkh$, where $\h$ and $\k$ are separable Hilbert spaces. 
We denote by $\cb{\bkh}$ the space of all completely bounded maps on $\bkh$. 
Given  C$^*$-subalgebras $A\subseteq\bh$ and $B\subseteq\bk$, we let
$A\ehg B$ be the Banach subalgebra of  $\cb{\bkh}$ consisting
of all maps $\Theta$ that can be represented in the form
\begin{equation}\label{201}\Theta(x):=\sumj c_jxd_j,\end{equation}
where $c_j\in A$ and $d_j\in B$ are such that the row $c=[c_j]$ and the column $d=(d_j)$
represent bounded operators in ${\rm B}(\h^{\infty},\h)$ and ${\rm B}(\k,\k^{\infty})$, respectively.
Thus  the sums 
\begin{equation}\label{31}\sumj c_jc_j^*\ \ \ \ \mbox{and}\ \ \ \ \sumj d_j^*d_j\end{equation}
converge in the strong operator topology. We will write such a map $\Theta$ 
simply as 
$$\Theta=c\odot d=\sumj c_j\otimes d_j.$$
The space $A\ehg B$ coincides with the extended Haagerup tensor product (defined in 
\cite{BS}, \cite{ER3}, \cite{M1}), but we shall not need this fact. The subspace
$A\hg B$ of $A\ehg B$, consisting of elements $c\odot d\in A\ehg B$ 
for which the two sums in (\ref{31}) are norm convergent, is a Banach subalgebra of $A\ehg B$, and
can be identified with the Haagerup tensor product, but again we shall not need this last fact. 
If $M$ and $N$ are von Neumann algebras then $M\ehg N$ coincides with the space 
${\rm NCB}_{\com{M},\com{N}}(\bkh)$ of all normal completely bounded
$\com{M},\com{N}$-bimodule endomorphisms of $\bkh$ (see \cite{S} or \cite[1.2]{M2};
here $\com{M}$ denotes the commutant of $M$).
It is well known that a weak* continuous map $\Theta$ between Banach spaces is invertible
if and only if its preadjoint map $\Theta_*$ is invertible \cite{Co}. Thus, if $\Theta\in
M\ehg N$ is invertible, then so is $\Theta_*$ (as a bounded map on ${\rm T}(\h,\k)$), hence
$\Theta^{-1}=((\Theta_*)^{-1})^*$ is weak* continuous. Since $\Theta^{-1}$ is also an
$\com{M},\com{N}$-bimodule map, it follows that $M\ehg N$ is an inverse-closed subalgebra
of $\cb{\bkh}$. The spectrum of an element $c$ in a Banach algebra $A$ is denoted by $\sigma_A(c)$.
We summarize the above discussion in the following proposition.

\begin{pr}\label{le}If $M$ and $N$ are von Neumann subalgebras of $\bh$ and 
$\bk$ (respectively) then $$\sigma_{M\ehg N}(\Theta)=\sigma_{\cb{\bkh}}(\Theta)$$
for each $\Theta\in M\ehg N$. 
\end{pr}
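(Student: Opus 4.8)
The plan is to prove the two inclusions separately, observing that one is automatic while the other is exactly the inverse-closedness recorded in the paragraph preceding the statement. Since $M\ehg N$ is a \emph{unital} subalgebra of $\cb{\bkh}$ (the identity map is $1\otimes1$, with $c_1=1\in M$ and $d_1=1\in N$), for any $\Theta\in M\ehg N$ and $\lambda\in\bc$ the invertibility of $\Theta-\lambda$ in $M\ehg N$ forces its inverse to lie in $\cb{\bkh}$, so $\Theta-\lambda$ is invertible there as well. This yields at once the inclusion $\sigma_{\cb{\bkh}}(\Theta)\subseteq\sigma_{M\ehg N}(\Theta)$, and the content of the proposition is the reverse inclusion.

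For the reverse inclusion I would fix $\lambda\notin\sigma_{\cb{\bkh}}(\Theta)$ and show that $(\Theta-\lambda)^{-1}$, which a priori lives only in $\cb{\bkh}$, in fact belongs to $M\ehg N$. Here I would invoke the identification $M\ehg N={\rm NCB}_{\com{M},\com{N}}(\bkh)$ cited from \cite{S} and \cite[1.2]{M2}, so that it suffices to verify that $(\Theta-\lambda)^{-1}$ is completely bounded, normal, and an $\com{M},\com{N}$-bimodule map. Complete boundedness is free, since invertibility in the Banach algebra $\cb{\bkh}$ means precisely that the inverse is a completely bounded map.

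Normality follows from the preadjoint argument already set up: the element $\Theta-\lambda$ lies in $M\ehg N$ and is therefore weak* continuous, so its preadjoint $(\Theta-\lambda)_*$ on ${\rm T}(\h,\k)$ is defined; invertibility of a weak* continuous map is equivalent to invertibility of its preadjoint \cite{Co}, whence $(\Theta-\lambda)^{-1}=(((\Theta-\lambda)_*)^{-1})^*$ is weak* continuous. That $(\Theta-\lambda)^{-1}$ is an $\com{M},\com{N}$-bimodule map is the only genuinely computational point, and a trivial one: for $a'\in\com{M}$, $b'\in\com{N}$ the bimodule identity for $\Theta-\lambda$ gives $(\Theta-\lambda)\bigl(a'(\Theta-\lambda)^{-1}(y)b'\bigr)=a'yb'$, and applying $(\Theta-\lambda)^{-1}$ to both sides yields $(\Theta-\lambda)^{-1}(a'yb')=a'(\Theta-\lambda)^{-1}(y)b'$. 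Collecting the three properties places $(\Theta-\lambda)^{-1}$ in ${\rm NCB}_{\com{M},\com{N}}(\bkh)=M\ehg N$, so $\lambda\notin\sigma_{M\ehg N}(\Theta)$, completing the proof.

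Because every step is either automatic or a direct consequence of the inverse-closedness argument given just before the statement, I do not expect any serious obstacle; the proposition is essentially a packaging of that discussion as a statement about spectra. The only place demanding care is the appeal to the identification $M\ehg N={\rm NCB}_{\com{M},\com{N}}(\bkh)$, which is where the operator-space input enters, and which I would quote rather than reprove.
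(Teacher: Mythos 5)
Your proof is correct and coincides with the paper's own argument: the proposition is explicitly stated as a summary of the preceding discussion, which establishes inverse-closedness of $M\ehg N={\rm NCB}_{\com{M},\com{N}}(\bkh)$ in $\cb{\bkh}$ via exactly your three ingredients --- complete boundedness of the inverse (automatic from invertibility in $\cb{\bkh}$), weak* continuity via the preadjoint on ${\rm T}(\h,\k)$, and the trivial $\com{M},\com{N}$-bimodule computation. The only difference is cosmetic: you spell out the easy inclusion $\sigma_{\cb{\bkh}}(\Theta)\subseteq\sigma_{M\ehg N}(\Theta)$ and work with $\Theta-\lambda$ explicitly, both of which the paper leaves implicit.
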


In many cases the above Proposition can be sharpened to the identity
$(M\ehg N)^{cc}=M\ehg N$. Namely, it is known (see \cite{ER1} or \cite{HW})
that the commutant $(M\ehg N)^c$
of $M\ehg N$ inside $\cb{\bkh}$ is the algebra ${\rm CB}_{M,N}(\bkh)$ of all completely 
bounded $M,N$-bimodule endomorphisms of $\bkh$, which we will
denote simply by $\com{M}\shg\com{N}$,  thus
\begin{equation}\label{101}(M\ehg N)^c=\com{M}\shg\com{N}.\end{equation}
(We remark that the notation $\com{M}\shg\com{N}$ usually means the normal Haagerup 
tensor product as defined in \cite{EKi}, \cite{ER3}, \cite[p. 41]{BLM},
but the two algebras $\com{M}\shg\com{N}$ and ${\rm CB}_{M,N}(\bkh)$ are naturally completely isometrically and weak* homeomorphically
isomorphic by \cite{EKi} (a simpler proof of a more general fact is in \cite[4.4]{M3}).)
By a surprising result of Hoffmeier and Wittstock \cite{HW} the commutant of $\com{M}\shg\com{N}$ in $\cb{\bkh}$ 
consists only of weak* continuous maps, if
$M$ and $N$ do not have central parts of type $I_{\infty,n}$ for $n\in\bn$, that is
\begin{equation}\label{102}(\com{M}\shg\com{N})^c=M\ehg N.\end{equation}
(In \cite{HW} only the case $N=M$ is considered, but the usual argument with the direct sum $M\oplus N$ reduces the general
situation to this case.) This holds in particular when $M$ and $N$ are abelian, thus, in this 
case we deduce from (\ref{101}) and (\ref{102}) that $(M\ehg n)^{cc}=M\ehg N$.

For noncommuting sequences $(c_j)$ and $(d_j)$ not much is known about the spectrum of
the operator $\Theta=c\odot d$ defined by (\ref{201}). For example, if $d_j=c_j$ are positive, it is
not known even if the spectrum of $\Theta$ is contained in $\br^+$ \cite{N}. We mention
here the following consequence of results of Shulman and Turovskii \cite{Sh},
which improves \cite[Corollary 6]{N}.

\begin{pr}Suppose that $c_j\in\bh$, $d_j\in\bk$ are positive and such that 
$\sumj\|c_j\|\|d_j\|<\infty$. If for each $j$ at least one of the operators $c_j$, $d_j$ is compact
then all eigenvalues of the operator $\Theta=c\odot d$ defined by (\ref{201}) on $\bkh$ are in $\br^+$.
\end{pr}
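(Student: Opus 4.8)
The plan is to split the statement into an \emph{elementary localization} of a nonzero eigenvalue onto the Hilbert--Schmidt ideal and a \emph{positivity computation} that is visible only on that ideal. First I would dispose of $\lambda=0\in\br^+$ and fix a nonzero eigenvalue $\lambda$ with eigenvector $0\ne x\in\bkh$, so that $\sumj c_jxd_j=\lambda x$. The elementary observation is that such an $x$ is automatically compact: for each $j$ at least one of $c_j,d_j$ is compact, so every $c_jxd_j$ is compact, and since $\sumj\|c_j\|\|d_j\|<\infty$ the series $\sumj c_jxd_j$ converges to $\lambda x$ in operator norm; a norm limit of finite sums of compact operators is compact, and $\lambda\ne0$ then forces $x$ to be compact.

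Next I would record the positivity, which is entirely contained in a quadratic-form computation on the Hilbert space ${\rm C^2}(\k,\h)$. The map $\Theta$ restricts to a bounded operator there, since $\|c_jxd_j\|_2\le\|c_j\|\|x\|_2\|d_j\|$ and $\sumj\|c_j\|\|d_j\|<\infty$. For $x\in{\rm C^2}(\k,\h)$, using $c_j,d_j\ge0$ and the cyclicity of the trace,
\begin{align*}
\inner{\Theta(x)}{x}=\tr\Big(x^*\sumj c_jxd_j\Big)
=\sumj\tr\big(d_j^{1/2}x^*c_jxd_j^{1/2}\big)
=\sumj\|c_j^{1/2}xd_j^{1/2}\|_2^2\ge0,
\end{align*}
where the interchange of trace and summation is justified by $\sumj\|c_j^{1/2}xd_j^{1/2}\|_2^2\le\|x\|_2^2\sumj\|c_j\|\|d_j\|<\infty$. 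Thus $\Theta|{\rm C^2}(\k,\h)$ is a positive operator on a Hilbert space (equivalently, under ${\rm C^2}(\k,\h)\cong\h\otimes\overline{\k}$ it is the positive operator $\sumj c_j\otimes\overline{d_j}$), so its spectrum lies in $\br^+$. Consequently any nonzero eigenvalue realized by a Hilbert--Schmidt eigenvector is automatically real and nonnegative.

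It remains to pass from a compact (a priori merely bounded) eigenvector of $\Theta$ on $\bkh$ into the Hilbert--Schmidt picture, and \emph{this localization is the step I expect to be the main obstacle}, where the results of Shulman and Turovskii \cite{Sh} are essential. Under the present hypothesis that each term $c_jxd_j$ carries a compact factor, their spectral theory of elementary operators yields ideal-independence of the nonzero part of the spectrum; in particular every nonzero eigenvalue of $\Theta$ on $\bkh$ already lies in the spectrum of the restriction $\Theta|{\rm C^2}(\k,\h)$. Combining this with the previous paragraph gives $\lambda\in\sigma(\Theta|{\rm C^2}(\k,\h))\subseteq\br^+$, which is the assertion (and which refines \cite[Corollary 6]{N}). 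The difficulty is genuine: the positivity computation is available only on the Hilbert--Schmidt ideal, the compact eigenvector cannot itself be interpreted as a vector in $\h\otimes\overline{\k}$, and without the compactness hypothesis there is no mechanism to move the eigenvector into ${\rm C^2}(\k,\h)$ at all, consistent with the fact noted before the proposition that positivity of the \emph{full} spectrum is open already for $d_j=c_j$.
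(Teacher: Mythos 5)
Your proposal is correct and takes essentially the same route as the paper: the paper also reduces to the restriction $\Theta|{\rm C^2}(\k,\h)$, which is a positive operator on a Hilbert space, and handles the localization step by citing \cite[6.10]{Sh}, which states precisely that every eigenvector of $\Theta$ corresponding to a nonzero eigenvalue is nuclear, hence Hilbert--Schmidt. Your ``ideal-independence'' phrasing of the Shulman--Turovskii input is exactly the consequence of that nuclearity result which the paper uses, and your preliminary compactness observation and explicit positivity computation are correct details the paper leaves implicit.
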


\begin{proof}By \cite[6.10]{Sh} each eigenvector corresponding to a nonzero eigenvalue
$\lambda$ of $\Theta$ is nuclear, hence in particular in the 
Hilbert-Schmidt class ${\rm C^2}(\k,\h)$. Since the restriction
$\Theta|{\rm C^2}(\k,\h)$ is a positive operator on a Hilbert space, its spectrum is contained
in $\br^+$, hence $\lambda\in\br^+$.
\end{proof}

We denote by $\Delta(A)$ the spectrum (that is, the space of all multiplicative linear
functionals) of a commutative Banach algebra $A$. If $A$ and $B$ are commutative operator
algebras  then it is easy to see that 
\begin{equation}\label{32}\Delta(A\hg B)=\Delta(A)\times\Delta(B).\end{equation}
For the spectrum of $A\ehg B$, however, there is no such simple formula. In the case
when $M$ and $N$ are (abelian) von Neumann algebras there is an injective contraction
from $M\ehg N$ into $M\overline{\otimes}N$ (which will be regarded as inclusion and is dual to 
the natural contraction $M_{*}
\stackrel{\wedge}{\otimes} N_{*}\to M_{*}\hg N_{*}$ \cite[1.5.13]{BLM}, \cite[6.1]{ER3}), and one might conjecture
that the spectrum of an element  of $M\ehg N$ is the same as the spectrum of its
image in $M\overline{\otimes}N$, but this is not always true even in the special case
$M=\ell^{\infty}(\bn)=N$. In this case $C:=M\overline{\otimes} N$ is the von Neumann algebra 
$\ell^{\infty}(\bn\times\bn)$ of
all bounded sequences  on $\bn\times\bn$. Further, $D:=M\ehg N$ is the algebra of all
Schur multipliers on ${\rm B}(\ell^2(\bn))$ (see \cite{Pi1}, Theorem 5.1), which consists of all
sequences $d\in\ell^{\infty}(\bn\times\bn)$ such that the double sequence $[d_{i,j}x_{i,j}]$ is a
matrix of a bounded operator on $\ell^2(\bn)$ for every $[x_{i,j}]$ representing a bounded operator
on $\ell^2(\bn)$. Such an element $d\in D$ is invertible in $C$ if and only if the closure
of the set $\{d_{i,j}\}$ in $\bc$ does not contain $0$, but this does not guarantee invertibility of
$d$ in $D$. To see this, we consider the following example suggested to us by Milan Hladnik and Victor
Shulman.

\begin{ex}\label{ex4}Let $D_0$ be the subalgebra of $D$ consisting of Toeplitz-Schur multipliers, that is,
Schur multipliers $d=[d_{i-j}]$ that are constant along the diagonals. If $d$ is invertible in $D$, then
$d^{-1}$ is also the inverse of $d$ in $C$, hence $d^{-1}$ consists of the double sequence 
$[d_{i-j}^{-1}]$, which is 
in $D_0$; so $D_0$ is inverse-closed in $D$. On the other hand, it is known that 
the entries of each Toeplitz-Schur multiplier
$[d_{i-j}]$ are the Fourier coefficients of a complex regular Borel measure $\mu$ on 
the unit circle $\mathbb{T}$ (that is, 
$d_k=\int_{\mathbb{T}} \overline{z}^k\, d\mu$) and conversely; that is, $D_0$ is isomorphic to the measure algebra
$M(\mathbb{T})$ for the convolution. (A proof of this can be found in \cite{AP}.) 
But by \cite[5.3.4]{R} there exists
a noninvertible measure $\mu\in M(\mathbb{T})$ such that the Fourier coefficients of $\mu$ 
are all real and $\geq1$, so the corresponding
Schur multiplier is invertible in $C$ but not in $D$. Moreover, by \cite[Theorem 6.4.1]{R}  the spectrum of such
a multiplier $[d_{i-j}]$ can contain any point in $\bc$ even if $d_k\in[-1,1]$ for all $k$.
\end{ex}

We remark that the spectra of elementary operators  
$x\mapsto\sum_{j=1}^mc_jxd_j$, where $m$ is finite and $c=(c_j), d=(d_j)\subseteq\bh$
are two commutative families, have been intensively studied in the past (see \cite{Cur} 
and the references in \cite{Cur} and in \cite{AM}), but the
results do not apply to the case of infinite $m$, where the two series $\sumj c_jc_j^*$ and $d_j^*d_j$ converge in the weak* topology.
Even if we assume that all the components $c_j$ and $d_j$ are normal operators, the above example suggests that the spectrum of
$c\odot d$ can not be described in terms of spectra of $c_j$ and $d_j$ in the same way as for finite $m$-tuples. 

If $A$ is an abelian Banach algebra and $c=(c_j)$ is a sequence of elements
in $A$ we set
$$\sigma_A(c)=\{(\rho(c_1),\rho(c_2),\ldots):\, \rho\in\Delta(A)\}.$$

\begin{lemma}\label{le31}If $c=(c_j)$ is a sequence in a commutative unital C$^*$ algebra $A
\subseteq\bh$ such that the series $\sumj c_j^*c_j$ is norm convergent,
then $\sigma_A(c)$ is a norm compact subset of $\ell^2$. If this sum is merely weak* 
convergent, then
$\sigma_{A}(c)$ is a weakly compact subset of $\ell^2$.
\end{lemma}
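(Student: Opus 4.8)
The plan is to realize $\sigma_A(c)$ as the image of the Gelfand spectrum $\Delta(A)$ under a single evaluation map and then obtain compactness from the compactness of $\Delta(A)$ together with a suitable continuity statement. Since $A$ is a commutative unital C$^*$-algebra, $\Delta(A)$ is a compact Hausdorff space, every character $\rho$ is a $*$-homomorphism (so $\rho(c_j^*c_j)=|\rho(c_j)|^2$) and a state with $\|\rho\|=1$, and each Gelfand transform $\rho\mapsto\rho(a)$ ($a\in A$) is continuous. I would define $\gamma:\Delta(A)\to\bc^{\bn}$ by $\gamma(\rho)=(\rho(c_1),\rho(c_2),\ldots)$, so that $\sigma_A(c)=\gamma(\Delta(A))$. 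The first point is that $\gamma$ takes values in $\ell^2$ with a uniform bound. For each finite $N$ the element $\sum_{j\le N}c_j^*c_j$ lies in $A$ and is positive, and since the partial sums increase to $s:=\sumj c_j^*c_j$ in the weak* (a fortiori the norm) topology, one gets the operator inequality $\sum_{j\le N}c_j^*c_j\le s$; hence $\sum_{j\le N}|\rho(c_j)|^2=\rho(\sum_{j\le N}c_j^*c_j)\le\|\sum_{j\le N}c_j^*c_j\|\le\|s\|$. Letting $N\to\infty$ gives $\|\gamma(\rho)\|_2^2\le\|s\|$ for every $\rho$, so $\gamma(\Delta(A))$ is a bounded subset of $\ell^2$.

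For the norm-convergent case I would show that $\gamma:\Delta(A)\to\ell^2$ is norm continuous and then invoke that the continuous image of a compact space is compact. Each coordinate $\rho\mapsto\rho(c_j)$ is continuous, so the only issue is a uniform tail estimate. Norm convergence gives $\|\sum_{j>N}c_j^*c_j\|\to0$, and since this tail lies in $A$ we have $\sum_{j>N}|\rho(c_j)|^2=\rho(\sum_{j>N}c_j^*c_j)\le\|\sum_{j>N}c_j^*c_j\|$ uniformly in $\rho$. Splitting $\|\gamma(\rho)-\gamma(\rho')\|_2^2$ into a finite head (controlled by coordinatewise continuity) and a tail (controlled by the uniform estimate) yields norm continuity of $\gamma$, and hence norm compactness of $\sigma_A(c)$.

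The weak* case is the same argument carried out in the weak topology, and here lies the only real subtlety: when the series converges merely weak*, $s$ need not belong to $A$, so one cannot apply $\rho$ to $s$ directly and must argue through finite partial sums and the operator order as above. I would show that $\gamma:\Delta(A)\to(\ell^2,\mathrm{weak})$ is continuous by checking that $\rho\mapsto\inner{\gamma(\rho)}{\eta}=\sumj\rho(c_j)\overline{\eta_j}$ is continuous for each fixed $\eta\in\ell^2$. The partial sums $\sum_{j\le N}\rho(c_j)\overline{\eta_j}$ are continuous, and by Cauchy--Schwarz $|\sum_{j>N}\rho(c_j)\overline{\eta_j}|\le\|s\|^{1/2}(\sum_{j>N}|\eta_j|^2)^{1/2}$, which tends to $0$ uniformly in $\rho$ because $\eta\in\ell^2$; thus the limit is continuous. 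Consequently $\gamma(\Delta(A))$ is weakly compact. I expect the bookkeeping to be routine; the one place to stay alert is precisely that $s\notin A$ in general, which forces the uniform $\ell^2$-bound to be derived from the monotone partial sums and the inequality $\sum_{j\le N}c_j^*c_j\le s$ rather than from continuity of $\rho$ at $s$.
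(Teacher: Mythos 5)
Your proof is correct and takes essentially the same route as the paper: both realize $\sigma_A(c)$ as the image of the compact space $\Delta(A)$ under the map $\rho\mapsto(\rho(c_j))$, bound $\sum_{j\le N}|\rho(c_j)|^2=\rho\bigl(\sum_{j\le N}c_j^*c_j\bigr)$ via partial sums, and deduce compactness from weak*-to-norm (resp.\ weak*-to-weak) continuity of this map. The paper dismisses the continuity verifications as easy, whereas you supply the uniform tail estimates explicitly; this is just filling in the same argument's details.
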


\begin{proof}For any character $\rho\in \Delta(A)$ and any finite $n$ we have 
$$\sum_{j=1}^n|\rho(c_j)|^2=\rho(\sum_{j=1}^nc_j^*c_j)\leq\|c\|^2,$$
which implies that $(\rho(c_j))\in\ell^2$ with $\|(\rho(c_j))\|\leq\|c\|$. 
It is easy to prove that the map $\rho\mapsto(\rho(c_j))$ from $\Delta(A)$ to $\ell^2$
is weak* to weak continuous, so its range $\sigma_A(c)$ is a weakly compact set since $\Delta(A)$ is 
weak* compact. 
If the series $\sumj c_j^*c_j$ is norm convergent, then the same map is weak* to norm
continuous, hence $\sigma_A(c)$ is a norm compact set in this case.
\end{proof}

Given two elements $\lambda=(\lambda_j)$ and $\mu=(\mu_j)$ in $\ell_2$ we denote
$$\lambda\cdot\mu:=\sumj\lambda_j\mu_j.$$ Further, for two subsets $\sigma_j\subseteq\ell^2$, we denote
$$\sigma_1\cdot\sigma_2:=\{\lambda\cdot\mu:\, \lambda\in\sigma_1,\ \mu\in\sigma_2\}.$$
Since the map $(\lambda,\mu)\mapsto\lambda\cdot\mu$ is continuous, $\sigma_1\cdot\sigma_2$ is a compact
subset of $\bc$ if $\sigma_1$ and $\sigma_2$ are norm compact subsets of $\ell^2$.

\begin{pr}\label{pr32}Let $(c_j)$ and $(d_j)$ be two commutative families of normal operators in $\bh$ and $\bk$ (respectively)
such that the two series (\ref{31}) 
are weak* convergent. Let A and B be the C$^*$ algebras generated by $\{1\}\cup(c_j)$ and $\{1\}\cup(d_j)$, respectively, and $\weakc{A}$, $\weakc{B}$
their weak* closures, so that the map $\Theta=c\odot d$ is an element of $\weakc{A}\ehg\weakc{B}$.

(i) If the two series (\ref{31}) are norm convergent (that is, if $\Theta\in A\hg B$) then 
$\sigma_{\cb{\bh}}(\Theta)=\sigma_{A}(c)\cdot\sigma_{B}(d)$.

(ii) In general the point spectrum of $\Theta$ is contained in $\sigma_{A}(c)\cdot\sigma_{B}(d)$.
\end{pr}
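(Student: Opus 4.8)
The plan is to treat the two parts by different machinery, exploiting that $A$ and $B$ are commutative, so that $A\hg B$ is a commutative unital Banach algebra (the composition product reduces to ordinary tensor multiplication) whose unit is the identity map, the unit of $\cb{\bkh}$. For part (i), where the two series (\ref{31}) are norm convergent so that $\Theta\in A\hg B$, I would first compute $\sigma_{A\hg B}(\Theta)$ by Gelfand theory: by (\ref{32}) every character of $A\hg B$ has the form $\rho\otimes\psi$ with $\rho\in\Delta(A)$, $\psi\in\Delta(B)$, and since $\rho\otimes\psi$ is contractive while the partial sums of $\Theta=\sum_j c_j\otimes d_j$ converge in the Haagerup norm, $(\rho\otimes\psi)(\Theta)=\sum_j\rho(c_j)\psi(d_j)$; as $\rho,\psi$ vary this ranges exactly over $\sigma_A(c)\cdot\sigma_B(d)$, so $\sigma_{A\hg B}(\Theta)=\sigma_A(c)\cdot\sigma_B(d)$. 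Because $A\hg B$ is a unital subalgebra of $\cb{\bkh}$, the spectrum can only shrink in the larger algebra, giving $\sigma_{\cb{\bkh}}(\Theta)\subseteq\sigma_A(c)\cdot\sigma_B(d)$.

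For the reverse inclusion I would produce approximate eigenvectors of rank one. Represent $A\cong C(\Delta(A))$ faithfully on $\h$ by a spectral measure $E$ of full support; for $\rho\in\Delta(A)$ the positive element $T=\sum_j(c_j-\rho(c_j))^*(c_j-\rho(c_j))$ lies in $A$ (by norm convergence) and satisfies $\rho(T)=0$, so full support yields unit vectors $\xi\in\h$ with $\sum_j\|c_j\xi-\rho(c_j)\xi\|^2=\langle T\xi,\xi\rangle$ arbitrarily small; symmetrically there are unit $\eta\in\k$ with $\sum_j\|d_j^*\eta-\overline{\psi(d_j)}\eta\|^2$ small. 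Writing $c_j\xi=\rho(c_j)\xi+u_j$ and $d_j^*\eta=\overline{\psi(d_j)}\eta+w_j$ and expanding $\Theta(\xi\eta^*)=\sum_j(c_j\xi)(d_j^*\eta)^*$, the difference $\Theta(\xi\eta^*)-\lambda\,\xi\eta^*$ (with $\lambda=\sum_j\rho(c_j)\psi(d_j)$) splits into three sums which are estimated in Hilbert--Schmidt norm by Cauchy--Schwarz in $\ell^2$, using $(\rho(c_j)),(\psi(d_j))\in\ell^2$ and the smallness of $\sum_j\|u_j\|^2$ and $\sum_j\|w_j\|^2$. Thus $\|\Theta(\xi\eta^*)-\lambda\,\xi\eta^*\|\to0$, so $\lambda$ is an approximate eigenvalue and lies in $\sigma_{\cb{\bkh}}(\Theta)$; combined with the previous paragraph this proves (i).

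In part (ii) the series are only weak* convergent, so the element $T$ above need not belong to $A$ and the approximate-eigenvector argument collapses. Here I would observe that $\Theta$ restricts to the operator $\xi\otimes\overline\eta\mapsto c_j\xi\otimes\overline{d_j^*\eta}$ on ${\rm C^2}(\k,\h)\cong\h\otimes\overline{\k}$, a weak* convergent sum $\sum_j(c_j\otimes1)(1\otimes\overline{d_j^*})$ of products of mutually commuting normal operators, hence normal; this explains why eigenvalues ought to lie in $\sigma_A(c)\cdot\sigma_B(d)$, but since an eigenvector $x\in\bkh$ need not be Hilbert--Schmidt I argue by localization. With $X=\Delta(A)$, $Y=\Delta(B)$ compact metrizable (separability), $\hat c_j,\hat d_j$ continuous and $\sum_j|\hat c_j(s)|^2$ bounded, a compactness/box-covering argument gives a point $(s_0,t_0)$ in the joint support of $x$ with $z:=E(W)xF(V)\neq0$ for all neighborhoods $W\ni s_0$, $V\ni t_0$. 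Writing $c_j=\hat c_j(s_0)+C_j$, $d_j=\hat d_j(t_0)+D_j$ and applying $E(W)(\cdot)F(V)$ to $\Theta(x)=\lambda x$ yields $(\lambda-\lambda_0)z=A+B+C$ with $\lambda_0=\sum_j\hat c_j(s_0)\hat d_j(t_0)\in\sigma_A(c)\cdot\sigma_B(d)$, and the goal is to bound each error sum by $\varepsilon\|z\|$.

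The hard part is the cross term $C=\sum_j C_jzD_j$: a row--column estimate bounds it by $\|z\|$ times $\big(\sup_{s\in W}\sum_j|\hat c_j(s)-\hat c_j(s_0)|^2\big)^{1/2}$ times a bounded factor, so I need norm-$\ell^2$ continuity of $s\mapsto(\hat c_j(s))$ at $s_0$, which fails in general because $\sum_j c_j^*c_j$ converges only weak*. I would remove this obstacle with Egorov's theorem: picking $\eta$ with $x\eta\neq0$ and the finite measure $\nu(\cdot)=\|E(\cdot)x\eta\|^2$, the tails $\sum_{j>N}|\hat c_j(s)|^2$ tend to $0$ uniformly off a set of arbitrarily small $\nu$-measure, so there is a closed set $X_0$ with $\nu(X_0)>0$ on which $s\mapsto(\hat c_j(s))$ is norm continuous. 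Running the box-covering argument with $x$ replaced by $E(X_0)x\neq0$ forces $s_0\in X_0$, and the required continuity then holds at $s_0$ relative to $X_0$, making $C$ small; the terms $A,B$ are controlled using only $(\hat c_j(s_0)),(\hat d_j(t_0))\in\ell^2$ and the automatic continuity of each individual $\hat c_j,\hat d_j$. Shrinking $W,V$ and letting $N\to\infty$ gives $\lambda=\lambda_0=\sum_j\rho(c_j)\psi(d_j)$ for the evaluation characters $\rho,\psi$ at $s_0,t_0$, which is precisely the assertion of (ii).
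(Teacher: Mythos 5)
Your argument is correct, but it takes a genuinely different route from the paper in both parts. For (i), the paper works inside the commutative Banach algebra $D=\weakc{A}\ehg\weakc{B}$: it computes $\sigma_D(\Theta)=\sigma_A(c)\cdot\sigma_B(d)$ by Gelfand theory and then transfers this to $\cb{\bkh}$ by invoking Proposition \ref{le}, the inverse-closedness of $M\ehg N$ in $\cb{\bkh}$, whose proof uses preadjoints and the bimodule characterization of $M\ehg N$; you instead get $\sigma_{\cb{\bkh}}(\Theta)\subseteq\sigma_{A\hg B}(\Theta)=\sigma_A(c)\cdot\sigma_B(d)$ for free from spectral monotonicity in subalgebras, and prove the reverse inclusion by building rank-one approximate eigenvectors $\xi\eta^*$ from the full support of the spectral measure (the identity representation of $A$ is faithful, so $E(W)\neq0$ for every nonempty open $W$). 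This avoids the operator-space input entirely, at the cost of an $\varepsilon$-computation; your Cauchy--Schwarz estimates are sound, and non-invertibility of $\Theta-\lambda$ in ${\rm B}(\bkh)$ does pass to the subalgebra $\cb{\bkh}$, so the direction of the comparison is the right one. For (ii), the paper reduces to (i): by the noncommutative Egoroff theorem it compresses by projections $e\in\weakc{A}$, $f\in\weakc{B}$ with $exf\neq0$ which make the compressed series norm convergent, concludes $\lambda\in\sigma_{\weakc{A}}(ec)\cdot\sigma_{\weakc{B}}(df)$, and finishes using $\phi(e)\in\{0,1\}$ together with the trick of passing to $\Theta+1$ when $\lambda=0$; you instead localize the eigenvector at a point $(s_0,t_0)$ by the box-covering compactness argument (which is valid: a finite cover by null boxes forces $x=0$ via joins of spectral projections) and estimate $|\lambda-\lambda_0|$ directly, using classical Egorov on the $c$-side to get norm-$\ell^2$ continuity of $s\mapsto(\hat c_j(s))$ on a closed set $X_0$ of positive measure. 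The two uses of Egoroff are the same idea in commutative dress, but your version never invokes (i), handles $\lambda=0$ with no special-casing, and is quantitative, while the paper's reduction is shorter and cleaner. Two points you should make explicit when writing this up: the replacement $x\mapsto E(X_0)x$ preserves the eigenvalue equation because $E(X_0)$ commutes with every $c_j$, and $E(X_0)x\neq0$ since $\nu(X_0)>0$; and in the cross-term only the row needs Egorov, since the column $(D_jF(V))$ is bounded by $2\|d\|$ without any norm convergence, while your terms $A$ and $B$ rest on the uniform-in-$t$ tail bound $|\sum_{j>N}\hat c_j(s_0)(\hat d_j(t)-\hat d_j(t_0))|\leq\|(\hat c_j(s_0))_{j>N}\|_{\ell^2}\cdot2\|d\|$, which is what makes the relevant scalar functions continuous and small near $(s_0,t_0)$.
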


\begin{proof}The spectrum of an element $\Theta$ in a unital commutative Banach algebra $D$ is always equal
to $\{\rho(\Theta):\, \rho\in\Delta(D)\}$. This applies to our element $\Theta=c\odot d$ in $D=\weakc{A}\ehg\weakc{B}$. 
Given $\rho\in\Delta(D)$, denote  $\phi=\rho|(\weakc{A}\otimes1)$ and $\psi=\rho|(1\otimes{B})$. Then $\phi\in\Delta(A)$,
$\psi\in\Delta(B)$ and (by the norm continuity) $\rho|(\weakc{A}\hg\weakc{B})=\phi\otimes\psi$. Conversely,  any two characters
$\phi\in\Delta(\weakc{A})$ and $\psi\in\Delta(\weakc{B})$ define the character $\phi\otimes\psi$ on $D$ by $(\phi\otimes\psi)(\sumj x_j\otimes y_j)
=\sumj\phi(x_j)\psi(y_j)$. So, if the two series (\ref{31}) are norm convergent then 
$\sigma_{D}(\Theta)=\sigma_{\weakc{A}}(c)\cdot\sigma_{\weakc{B}}(d).$  Since all 
characters on 
$A$ and $B$ extend to characters on $\weakc{A}$
and $\weakc{B}$, respectively, it also follows that $\sigma_{A\hg B}(\Theta)=\sigma_A(c)\cdot\sigma_B(d)=\sigma_D(\Theta)$. 
By Proposition \ref{le} we have that $\sigma_{\cb{\bkh}}(\Theta)=\sigma_D(\Theta)$ for each
$\Theta\in D$. This concludes the proof of (i). 

To prove (ii), let $\lambda$ be an eigenvalue of $\Theta$ and $x\in\bkh$ a corresponding 
nonzero eigenvector, so that $\Theta(x)=\lambda x$. 
By a variant of Egoroff´s theorem 
\cite[p. 85]{T} 
in any neighborhoods of the identity $1$ (in the strong operator
topology) there exists projections $e\in\weakc{A}$ and $f\in\weakc{B}$ such that the two series
$\sumj c_jc_j^*e$ and $\sumj d_j^*d_jf$ converge uniformly. We may choose $e$ and $f$ so that $exf\ne0$. Since
$$(ec)(exf)^{\infty}(df)=\lambda exf,$$
$\lambda$ is an eigenvalue of $(ec)\odot (df)$, hence 
$\lambda\in\sigma_{\weakc{A}}(ec)\cdot\sigma_{\weakc{B}}(df)$ by (i). 
Since $\phi(e)\in\{0,1\}$ for each $\phi\in\Delta(\weakc{A})$ and similarly for $f$, it follows that $\lambda\in\sigma_{\weakc{A}}(c)\cdot
\sigma_{\weakc{B}}(d)=\sigma_A(c)\cdot\sigma_B(d)$ if $\lambda\ne0$. If $\lambda=0$, we apply the result just obtained to the map $\Theta+1=\tilde{c}\odot\tilde{d}$, where
$\tilde{c}=[1,c_1,c_2,\ldots]$ and $\tilde{d}=(1,d_1,d_2,\ldots)$ and the eigenvalue $1$ of this map.
\end{proof}

\begin{theorem}\label{th40}Let $a=(a_j)$ and $b=(b_j)$ be two commutative sequences of normal operators on (separable) Hilbert
spaces $\h$ and $\k$ (respectively) such that $\sumj a_ja_j^*=1$ and $\sumj b_j^*b_j=1$, 
where the sums are weak* convergent. Then the fixed points
of the map $\Theta=a\odot b=\sumj a_j\otimes b_j$ on $\bh$ are precisely the operators 
$x\in\bh$ that intertwine $a$ and
$b^*$ (that is, $a_jx=xb_j^*$ for all $j$).
\end{theorem}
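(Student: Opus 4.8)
The plan is to establish the two inclusions separately: the containment of the intertwiners in the fixed-point set is a one-line computation, while the reverse containment rests on the joint spectral calculus of the two commuting normal families together with the description of the point spectrum in Proposition~\ref{pr32}. For the easy inclusion, suppose $a_jx=xb_j^*$ for every $j$; then
\begin{equation*}\Theta(x)=\sumj a_jxb_j=\sumj xb_j^*b_j=x\sumj b_j^*b_j=x,\end{equation*}
so $x$ is fixed. Note that this direction uses only $\sumj b_j^*b_j=1$, whereas normality of the $a_j$ and the relation $\sumj a_ja_j^*=1$ will enter the converse.

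For the reverse inclusion, let $\Theta(x)=x$ and let $E$ and $F$ be the joint spectral measures of $(a_j)$ and $(b_j)$ on the spectra $X=\Delta(\weakc{A})$ and $Y=\Delta(\weakc{B})$, with coordinate functions $\lambda_j$ and $\mu_j$. Normality together with $\sumj a_ja_j^*=1=\sumj b_j^*b_j$ forces $\sumj|\lambda_j|^2=1$ $E$-a.e.\ and $\sumj|\mu_j|^2=1$ $F$-a.e., so that $X$ and $Y$ are carried into the unit sphere of $\ell^2$ (compare Lemma~\ref{le31}). The first key point is that every spectral corner $E(\alpha)xF(\beta)$ ($\alpha\subseteq X$, $\beta\subseteq Y$ Borel) is again fixed by $\Theta$: since $E(\alpha)\in\weakc{A}$ commutes with each $a_j$ and $F(\beta)\in\weakc{B}$ with each $b_j$, one has $\Theta(E(\alpha)xF(\beta))=E(\alpha)\Theta(x)F(\beta)=E(\alpha)xF(\beta)$. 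Restricting $a$ and $b$ to the reducing subspaces $E(\alpha)\h$ and $F(\beta)\k$ gives a map of the same form, so if such a corner is nonzero then $1$ is an eigenvalue of the compressed map, and Proposition~\ref{pr32}(ii) yields $\lambda,\mu$ in the closed unit ball of $\ell^2$, with $\lambda_j$ in the essential range of the $j$-th coordinate over $\alpha$ (and likewise for $\mu$ over $\beta$), such that $\sumj\lambda_j\mu_j=1$. By Cauchy--Schwarz $|\sumj\lambda_j\mu_j|\le\|\lambda\|_2\|\mu\|_2\le1$, and the value $1$ can be attained only when $\mu_j=\overline{\lambda_j}$ for all $j$.

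The second key point, which I expect to be the main obstacle, is to convert this rigidity into the exact identities $a_jx=xb_j^*$; the difficulty is that the two series converge only weak$^*$, so the joint spectrum need not be norm compact in $\ell^2$ and one must argue one coordinate at a time. Fixing $j$ and a finite Borel partition $\{Q_i\}$ of $\bc$ into cells of diameter less than $\varepsilon$, I would set $\alpha_i=\{\lambda\in X:\lambda_j\in Q_i\}$ and $\beta_i=\{\mu\in Y:\overline{\mu_j}\in Q_i\}$. By the preceding paragraph, whenever $\overline{Q_i}\cap\overline{Q_k}=\emptyset$ the corner $E(\alpha_i)xF(\beta_k)$ vanishes, because $\mu=\overline{\lambda}$ would force $\lambda_j=\overline{\mu_j}\in\overline{Q_i}\cap\overline{Q_k}$. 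Hence in $x=\sum_{i,k}E(\alpha_i)xF(\beta_k)$ only the diagonal together with the finitely many neighbouring off-diagonal corners survive, and on each survivor $a_jE(\alpha_i)$ and $F(\beta_k)b_j^*=b_j^*F(\beta_k)$ differ from the scalar multiples $c_iE(\alpha_i)$ and $c_kF(\beta_k)$ (with $c_i\in Q_i$) by at most $\varepsilon$ in norm. Using the orthogonality of the ranges $E(\alpha_i)\h$ and of the projections $F(\beta_k)$ to control the sums, both $a_jx$ and $xb_j^*$ then lie within $O(\varepsilon)$ of $\sum_{i,k}c_iE(\alpha_i)xF(\beta_k)$, the only discrepancy arising from neighbouring cells where $|c_i-c_k|<2\varepsilon$. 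Letting $\varepsilon\to0$ yields $a_jx=xb_j^*$ for every $j$. Conceptually, this last step records that the $\bkh$-valued bimeasure $(\alpha,\beta)\mapsto E(\alpha)xF(\beta)$ is concentrated on the conjugate-diagonal $\{\mu=\overline{\lambda}\}$, on which $\lambda_j$ and $\overline{\mu_j}$ coincide, so that integrating either coordinate against it produces the same operator.
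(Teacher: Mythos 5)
Your proposal is correct, and its core engine is the same as the paper's: compress $x$ to spectral corners, observe that each corner $E(\alpha)xF(\beta)$ is again fixed because the spectral projections commute with the coefficient families, apply Proposition~\ref{pr32}(ii) to the compressed map to get $1=\sumj\lambda_j\mu_j$ with $\|\lambda\|,\|\mu\|\leq1$, and invoke equality in Cauchy--Schwarz to force $\mu=\overline{\lambda}$, so that corners supported away from the conjugate-diagonal vanish. Where you genuinely diverge is in the surrounding architecture. The paper first reduces to the case $b_j=a_j^*$ by a $2\times2$ matrix trick, then works with a single spectral measure on the Gelfand spectrum $\Delta(A)$ of the C$^*$-algebra generated by $1$ and the $a_j$, shows $e(H)xe(K)=0$ for \emph{disjoint compact} subsets $H,K\subseteq\Delta(A)$, and closes the argument qualitatively using the injectivity of the joint Gelfand transform $\hat a$ (which is why it matters there that $A$ is generated by the $a_j$) together with $Ae(H)\cong C(H)$; this yields $x\in\com{\weakc{A}}$ outright. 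You instead keep $a$ and $b$ symmetric, and replace the injectivity/disjoint-compacta endgame by a quantitative, coordinate-by-coordinate $\varepsilon$-grid argument: for fixed $j$ you only ever use the spectral measures of the single operators $a_j$ and $b_j^*$, partition their (bounded) ranges into small cells, kill the far-off-diagonal corners by the rigidity step, and estimate $\|a_jx-xb_j^*\|=O(\varepsilon)$. This buys a direct proof of each identity $a_jx=xb_j^*$ with no appeal to the separating property of the full joint transform, at the cost of a slightly more technical approximation; the paper's route is shorter once the $2\times2$ reduction is made, but is tied to the generating family as a whole.

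Two small repairs you should make, both routine. First, a finite Borel partition of all of $\bc$ into cells of diameter less than $\varepsilon$ does not exist; but since $\sumj a_ja_j^*=1$ forces $\|a_j\|\leq1$ (and likewise $\|b_j\|\leq1$), the relevant essential ranges sit in the closed unit disc, so partition the disc and discard the spectrally null remainder. Second, your phrase ``using the orthogonality of the ranges \ldots to control the sums'' needs one more line: the surviving pairs $(i,k)$ with $\overline{Q_i}\cap\overline{Q_k}\neq\emptyset$ form a banded pattern in which each $i$ meets at most $9$ cells $k$ (for a square grid), so the error term $\sum_{i,k}(c_i-c_k)E(\alpha_i)xF(\beta_k)$ splits into at most $9$ sums of the form $\sum_i z_iE(\alpha_i)xF(\beta_{\sigma(i)})$ with $\sigma$ injective and $|z_i|<2\varepsilon$, and each such sum has norm at most $2\varepsilon\|x\|$ by orthogonality of the $E(\alpha_i)\h$ and of the $F(\beta_{\sigma(i)})$. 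Finally, your side remark that $\sumj|\lambda_j|^2=1$ holds $E$-a.e.\ requires care (Gelfand transforms do not respect weak$^*$ limits pointwise), but you never use it: the inequality $\|\lambda\|\leq1$, obtained exactly as in the paper from $\sumj c_jc_j^*\leq e$, is all the rigidity step needs.
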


\begin{proof}Clearly the intertwiners of $a$ and $b^*$ are fixed points of $\Theta$ since 
$\sumj b_j^*b_j=1$, so only the converse
needs a proof. By considering 
$$\left[\begin{array}{ll}
a_j&0\\
0&b_j^*\end{array}\right],\ \ \left[\begin{array}{ll}
a_j^*&0\\
0&b_j\end{array}\right]\ \ \mbox{and}\ \ \left[\begin{array}{ll}
0&x\\
0&0\end{array}\right]$$
instead of $a_j$, $b_j$ and $x$ (respectively), the proof can easily be reduced to the case where $b_j=a_j^*$. So we assume
that $b_j=a_j^*$ for all $j$ and we have to prove that each fixed point $x$ of $\Theta$ 
commutes with all $a_j$. 
Let $A$ be the C$^*$-algebra generated by $1$ and $(a_j)$ and let $e$ be the spectral measure on $\Delta:=\Delta(A)$
such that 
$$c=\int_{\Delta}\hat{c}(\phi)\, de(\phi)$$
for all $c\in A$, where $\hat{c}$ is the Gelfand transform of $c$ \cite[p. 266]{Co}.  
It suffices to show that $xe(K)=e(K)x$ for each compact subset $K$ of $\Delta$ or, 
equivalently, that
$e(K)^{\perp}xe(K)=0$, where $e(K)^{\perp}=1-e(K)$. Since $e(K)^{\perp}=e(\Delta\setminus K)$ is the join
of all the projections $e(H)$ for compact subsets $H$ of $K^c:=\Delta\setminus K$, it suffices to show that $e(H)xe(K)=0$
for all such $H$. Assume the contrary, that
$$e(H)xe(K)\ne0$$
for some compact $H\subseteq K^c$. Consider the orthogonal decomposition 
\begin{equation}\label{103}\h=e(H)\h\oplus e(K)\h\oplus e(H^c\cap K^c)\h\end{equation} and
let $x=[x_{k,l}]$ be the corresponding representation of $x$ by a $3\times 3$ 
operator matrix. With respect to the decomposition
(\ref{103}) each operator $a_j$ is represented by a diagonal matrix 
$a_j=c_j\oplus d_j\oplus f_j$ (where, for example, $c_j=a_je(H)|e(H)\h$). Then the $(1,2)$ 
entry of the matrix 
$\Theta(x)=\sumj a_jxa_j^*$ is
$\sumj c_jx_{1,2}d_j^*$, where $x_{1,2}=e(H)xe(K)\ne0$. From $\Theta(x)=x$ we have
$$\sumj c_jx_{1,2}d_j^*=x_{1,2},$$
which means that $1$ is an eigenvalue of the map $\Theta_{c,d^*}:=\sumj c_j\otimes d_j^*$.
By Proposition \ref{pr32} 
\begin{equation}\label{104}1=\inner{\lambda}{\mu}\ \ \ \mbox{for some}\ 
\lambda\in\sigma_{Ae(H)}(c),\ \mu\in\sigma_{Ae(K)}(d).\end{equation}
Since $\sumj c_jc^{*}_j=e(H)$ and $\sumj d_jd_j^*=e(K)$, it follows that $\|\lambda\|\leq1$
and $\|\mu\|\leq1$, hence (\ref{104}) implies that $\mu=\lambda$. Therefore
\begin{equation}\label{105}\sigma_{Ae(H)}(c)\cap\sigma_{Ae(K)}(d)\ne\emptyset.\end{equation}

On the other hand, the map $\hat{a}:\phi\mapsto(\phi(a_1),\phi(a_2),\ldots)$ from 
$\Delta$ into
$\ell^2$ is injective.  Since the C$^*$-algebra $Ae(H)$
is isomorphic to $C(H)$ (complex valued continuous functions on $H$) by Tietze's theorem,
$\Delta(Ae(H))\cong H$. (That is, all characters of $Ae(H)$ are evaluations at points
of $H$.) Hence $\sigma_{Ae(H)}(c)=\sigma_{Ae(H)}(ae(H))=\hat{a}(H)$.
Similarly $\sigma_{Ae(K)}(d)=\hat{a}(K)$. Since $H$ and $K$ are disjoint and $\hat{a}$ is
injective, $\sigma_{Ae(H)}(c)$ and $\sigma_{Ae(K)}(d)$ must also be disjoint, but this 
is in contradiction with (\ref{105}).
\end{proof}

\medskip
{\bf Problem.} Does the conclusion of Theorem \ref{th40} still hold if, instead of commutativity,
we assume that each of the two sequences $(a_j)$ and $(b_j)$ is contained in a finite
von Neumann algebra?

\end{document}